\newcommand\rnumber{\operatorname{-}}
\theoremstyle{definition}
\newtheorem{thm}{Theorem}
\newtheorem{defn}{Definition}%
\newtheorem{exa}{Example}
\newcommand{\cX}{\underline{\bf X}}
\newcommand{\cZ}{\underline{\bf Z}}
\newcommand{\cC}{\underline{\bf C}}
\newcommand{\cB}{\underline{\bf B}}
\newcommand{\cY}{\underline{\bf Y}}
\newcommand{\cU}{\underline{\bf U}}
\newcommand{\cV}{\underline{\bf V}}
\newcommand{\cS}{\underline{\bf S}}
\newcommand{\Z}{{\bf Z}}
\newcommand{\R}{{\bf R}}
\newcommand{\X}{{\bf X}}
\newcommand{\Y}{{\bf Y}}
\newcommand{\I}{{\bf I}}
\newcommand{\C}{{\bf C}}
\newcommand{\Q}{{\bf Q}}
\newcommand{\M}{{\bf M}}
\newcommand{\N}{{\bf N}}
\newcommand{\E}{{\bf E}}
\newcommand{\U}{{\bf U}}
\newcommand{\tS}{{\bf S}}
\newcommand{\V}{{\bf V}}
\newcommand{\W}{{\bf W}}
\begin{document}

\begin{frontmatter}



\title{Randomized algorithms for computing the generalized tensor SVD based on the tensor product}

            
\affiliation[label2]{organization={Lab of Machine Learning and Knowledge Representation, Innopolis University, 420500 Innopolis, Russia,\,\,email: salman.ahmadiasl@gmail.com},
            }
\author[label2,label1]{Salman Ahmadi-Asl}

\affiliation[label1]{organization={Center for Artificial Intelligence Technology, Skolkovo Institute of Science and Technology, Moscow, Russia},}

\author[label3]{Naeim Rezaeian}
 \affiliation[label3]{organization= Peoples' Friendship University of Russia, Moscow, Russia}

\author[label4]{Ugochukwu O. Ugwu
}
\affiliation[label4]{organization={Department of Mathematics, 
Colorado State University, Fort Collins, USA}
            }





\begin{abstract}
This work deals with developing two fast randomized algorithms for computing the generalized tensor singular value decomposition (GTSVD) based on the tensor product (T-product). The random projection method is utilized to compute the important actions of the underlying data tensors and use them to get small sketches of the original data tensors, which are easier to handle. Due to the small size of the sketch tensors, deterministic approaches are applied to them to compute their GTSVD. Then, from the GTSVD of the small sketch tensors, the GTSVD of the original large-scale data tensors is recovered. Some
experiments are conducted to show the effectiveness of the proposed approach. 
\end{abstract}



\begin{keyword}
Randomized algorithms, generalized tensor SVD, tensor product
\MSC 15A69 \sep 46N40 \sep 15A23
\end{keyword}

\end{frontmatter}

\section{Introduction}
The Singular Value Decomposition (SVD) is a matrix factorization that has been widely used in many applications, such as signal processing and machine learning \cite{golub2013matrix}. It can compute the best low-rank approximation of a matrix in the least-squares sense for any invariant matrix norm. When applied to a single matrix, the SVD can effectively capture orthonormal bases associated with the four fundamental subspaces. The idea of extending SVD to a pair of matrices was first proposed in \cite{van1976generalizing,paige1981towards}, and is referred to as the generalized SVD (GSVD). The GSVD has found practical applications in solving inverse problems \cite{hansen1998rank}, genetics \cite{alter2003generalized,ponnapalli2011higher}, Kronecker
canonical form of a general matrix pencil \cite{kaagstrom1984generalized}, the linearly constrained least-squares problem \cite{barlow1988error,van1985method},
the general Gauss-Markov linear model \cite{bai1988numerical,paige1985general}, the generalized total least squares problem \cite{van1989analysis}, and
real time signal processing \cite{speiser1984signal}. 

However, the classical SVD or GSVD is prohibitive for computing low-rank approximations of large-scale data matrices. To circumvent this difficulty, randomization approach is often employed to efficiently compute the SVD or GSVD of such matrices, see, e.g., \cite{halko2011finding,wei2021randomized,saibaba2021randomized}. Randomized SVD and GSVD methods first capture the range of the given data matrices through multiplication with random matrices or by simply sampling some columns of the original data matrices. Next, generate an orthonormal basis to identify small matrix sketches that are easily manageable. The desired SVD or GSVD of the original data is recovered from the SVD or GSVD of the small sketches. 

The benefits of randomized algorithms make them ubiquitous tools in numerical linear algebra. Specifically, randomized approaches provide stable approximations and can be implemented in parallel to significantly speed up SVD or GSVD computation of large-scale matrices. Extensions of randomized SVD to third-order tensors using different tensor decompositions have been considered in the literature, see, e.g., \cite{ahmadi2020randomized,ahmadi2021randomized,che2019randomized,ahmadi2024randomized,zhang2018randomized,ding2023randomized}, and references therein.  


Here, we will focus on the use of the tensor SVD (T-SVD) approach proposed in \cite{kilmer2013third,kilmer2011factorization}. The T-SVD approach uses the tensor T-product first introduced in \cite{kilmer2011factorization} to multiply two or more tensors. The T-product between two third-order tensors, which will be defined below, is computed by first transforming the given tensors into the Fourier domain along the third dimension, evaluating matrix-matrix products in the Fourier domain, and then computing the inverse Fourier transform of the result. The T-SVD has similar properties as the classical SVD because its truncation version provides the best tubal rank approximation in the least-squares sense. This is in contrast to the Tucker decomposition \cite{tucker1964extension} or the canonical polyadic decomposition \cite{hitchcock1927expression}. For applications of the T-SVD, we refer to \cite{miao2020generalized,cao2022perturbation,che2022fast,wang2020tensor}.

The GSVD has been generalized to third-order tensors based on the T-SVD approach in \cite{zhang2021cs}, and applied to image processing applications. We will refer to this generalization as the  Generalized Tensor SVD (GTSVD). The GTSVD has been recently used in \cite{ahmadi2023note} to sample relevant lateral/horizontal slices of one data tensor relative to one or two other data tensors. Motivated by promising results reported in \cite{wei2021randomized,saibaba2021randomized,wei2016tikhonov}, we develop two fast randomized algorithms for computing the GTSVD. The key contributions of this work are as follows:

\begin{itemize}
    \item We develop two fast randomized algorithms for the computation of the GTSVD based on the T-product \cite{kilmer2011factorization}. The proposed algorithms achieve several orders of magnitude acceleration compared to the existing algorithms. This makes it of more practical interest for big-data processing and real-time applications.
    
    
    \item We provide convincing computer simulations to demonstrate the applicability of the proposed randomized algorithm. In particular, we provide a simulation for the image restoration application. 
    

\end{itemize}

The structure of this paper is as follows. Section \ref{Sec:prelim} provides preliminaries associated with third-order tensors and the T-product \cite{kilmer2013third,kilmer2011factorization}. Here, we introduce the T-SVD model and the necessary algorithms for its computations. In Section \ref{Sec:GTSVD}, we present the GSVD, and its extension to third-order tensors, i.e., the GTSVD framework. Two randomized GTSVD algorithms are proposed in Section \ref{Sec:RGTSVD} with their error analyses shown in Section \ref{Sec:error}. Computer-simulated results are reported in Section \ref{Sec:Exper}. Section \ref{Sec:Con} presents concluding remarks.   

\section{Basic definitions and concepts}\label{Sec:prelim}
We adopt the same notations used in \cite{cichocki2016tensor} in this paper. So, to represent a tensor, a matrix, and a vector, we use an underlined bold capital letter, a bold capital letter, and a bold lower letter. Slices are important subtensors that are generated by fixing all but two modes. In particular, for a third-order tensor $\underline{\X},$ the three type of slices $\underline{\X}(:,:,k),\,\underline{\X}(:,j,:),\,\underline{\X}(i,:,:)$ are called frontal, lateral and horizontal slices. For convenience, sometimes in the paper, we use an equivalent notation ${\X}^{(k)}\equiv\underline{\X}(;,:,k)$. Fibers are generated by fixing 
all but one mode, so they are vectors. For a third-order tensor $\underline{\X},$ the fiber $\underline{\X}(i,j,:)$ is called a tube. The notation “conj” denotes the component-wise complex conjugate of a matrix. The Frobenius norm of matrices or tensors is denoted by $\|.\|_F$. 
The notation $\|.\|_2$ stands for the Euclidean norm of vectors and the spectral norm of matrices. For a positive definite matrix ${\bf S}$, the weighted inner product norm is defined as $\|{\bf X}\|_{\bf S}=\sqrt{{\rm Tr}({\bf X}^T{\bf S}{\bf X})}$ where ``${\rm Tr}$'' is the trace operator. The mathematical expectation is represented by $\mathbb{E}$. The singular values of a matrix $\X$, are denoted by $\sigma_1,\,\sigma_2,\ldots,\sigma_{R}$ where $R$ is the rank of the matrix $\X$.  We now present the next definitions, which we need in our analysis. 

\begin{defn} ({T-product})
Let $\underline{\mathbf X}\in\mathbb{R}^{I_1\times I_2\times I_3}$ and $\underline{\mathbf Y}\in\mathbb{R}^{I_2\times I_4\times I_3}$, the tensor product (T-product) $\underline{\mathbf X}*\underline{\mathbf Y}\in\mathbb{R}^{I_1\times I_4\times I_3}$ is defined as follows
\begin{equation}\label{TPROD}
\underline{\mathbf C} = \underline{\mathbf X} * \underline{\mathbf Y} = {\rm fold}\left( {{\rm circ}\left( \underline{\mathbf X} \right){\rm unfold}\left( \underline{\mathbf Y} \right)} \right),
\end{equation}
where 
\[
{\rm circ} \left(\underline{\mathbf X}\right)
=
\begin{bmatrix}
\underline{\mathbf X}(:,:,1) & \underline{\mathbf X}(:,:,I_3) & \cdots & \underline{\mathbf X}(:,:,2)\\
\underline{\mathbf X}(:,:,2) & \underline{\mathbf X}(:,:,1) & \cdots & \underline{\mathbf X}(:,:,3)\\
 \vdots & \vdots & \ddots &  \vdots \\
 \underline{\mathbf X}(:,:,I_3) & \underline{\mathbf X}(:,:,I_3-1) & \cdots & \underline{\mathbf X}(:,:,1)
\end{bmatrix},
\]
and 
\[
{\rm unfold}(\underline{\mathbf Y})=
\begin{bmatrix}
\underline{\mathbf Y}(:,:,1)\\
\underline{\mathbf Y}(:,:,2)\\
\vdots\\
\underline{\mathbf Y}(:,:,I_3)
\end{bmatrix},\hspace*{.5cm}
\underline{\mathbf Y}={\rm fold} \left({\rm unfold}\left(\underline{\mathbf Y}\right)\right).
\]
Algorithm \ref{ALG:T_prod}, summarizes the computation process of the T-product.
\end{defn}

\begin{defn} ({Transpose})
The transpose of a tensor $\underline{\mathbf X}\in\mathbb{R}^{I_1\times I_2\times I_3}$ is denoted by $\underline{\mathbf X}^{T}\in\mathbb{R}^{I_2\times I_1\times I_3}$ obtained by applying the transpose operator to all frontal slices of the tensor $\underline{\mathbf X}$ and reversing the order of the transposed frontal slices 2 through $I_3$.
\end{defn}

\begin{defn} ({Identity tensor})
The identity tensor $\underline{\mathbf I}\in\mathbb{R}^{I_1\times I_1\times I_3}$ is a tensor whose first frontal slice is an identity matrix of size $I_1\times I_1$ and all other frontal slices are zero. It is easy to show $\underline{\I}*\underline{\X}=\underline{\X}$ and $\underline{\X}*\underline{\I} =\underline{\X}$ for all tensors of conforming sizes.
\end{defn}
\begin{defn} ({Orthogonal tensor})
We call that a tensor $\underline{\mathbf X}\in\mathbb{R}^{I_1\times I_1\times I_3}$ is orthogonal if ${\underline{\mathbf X}^T} * \underline{\mathbf X} = \underline{\mathbf X} * {\underline{\mathbf X}^ T} = \underline{\mathbf I}$.
\end{defn}

\begin{defn} ({f-diagonal tensor})
If all frontal slices of a tensor are diagonal, then the tensor is called an f-diagonal tensor.
\end{defn}

\begin{defn}
(Moore-Penrose pseudoinverse of a tensor) Let $\cX\in\mathbb{R}^{I_1\times I_2\times I_3}$ be given. The Moore-Penrose (MP) pseudoinverse of the tensor $\cX$ is denoted by $\cX^{\dag}\in\mathbb{R}^{I_2\times I_1\times I_3}$ and is a unique tensor satisfying the following four equations:
\begin{eqnarray*}
\cX^{\dag}*\cX*\cX^{\dag}=\cX^{\dag},\quad \cX*\cX^{\dag}*\cX=\cX,\\
(\cX*\cX^{\dag})^T=\cX*\cX^{\dag},\quad (\cX^{\dag}*\cX)^T=\cX^{\dag}*\cX.
\end{eqnarray*}
The MP pseudoinverse of a tensor can also be computed in the Fourier domain as shown in Algorithm \ref{ALG:Moore-Penrose}. 
\end{defn}

The inverse of a tensor is a special case of the MP pseudoinverse of tensors.  The inverse of $\cX\in\mathbb{R}^{I_1\times I_1\times I_3}$ is denoted by $\cX^{-1}\in\mathbb{R}^{I_1\times I_1\times I_3}$ is a unique tensor satisfying 
$
\cX*\cX^{-1}=\cX^{-1}*\cX=\underline{\bf I},
$
where $\underline{\bf I}\in\mathbb{R}^{I_1\times I_1\times I_3}$ is the identity tensor.
The inverse of a tensor can be also computed in the Fourier domain by replacing the MATLAB command ``inv'' instead of ``pinv'' in Line 3 of Algorithm \ref{ALG:Moore-Penrose}.

\begin{defn}
(Standard random tensors) A tensor $\underline{\bf \Omega}$ is standard Gaussian random if its first frontal slice $\underline{\bf \Omega}(:,:,1)$ is a standard Gaussian matrix, while the other frontal slices are zero.
\end{defn}

\RestyleAlgo{ruled}
\LinesNumbered
\begin{algorithm}
\SetKwInOut{Input}{Input}
\SetKwInOut{Output}{Output}\Input{Two data tensors $\underline{\mathbf X} \in {\mathbb{R}^{{I_1} \times {I_2} \times {I_3}}},\,\,\underline{\mathbf Y} \in {\mathbb{R}^{{I_2} \times {I_4} \times {I_3}}}$} 
\Output{T-product $\underline{\mathbf C} = \underline{\mathbf X} * \underline{\mathbf Y}\in\mathbb{R}^{I_1\times I_4\times I_3}$}
\caption{The T-product of two tensors \cite{kilmer2011factorization,lu2019tensor}}\label{ALG:T_prod}
      {
      $\widehat{\underline{\mathbf X}} = {\rm fft}\left( {\underline{\mathbf X},[],3} \right)$;\\
      $\widehat{\underline{\mathbf Y}} = {\rm fft}\left( {\underline{\mathbf Y},[],3} \right)$;\\
\For{$i=1,2,\ldots,\lceil \frac{I_3+1}{2}\rceil$}
{                        
$\widehat{\underline{\mathbf C}}\left( {:,:,i} \right) = \widehat{\underline{\mathbf X}}\left( {:,:,i} \right)\,\widehat{\underline{\mathbf Y}}\left( {:,:,i} \right)$;\\
}
\For{$i=\lceil\frac{I_3+1}{2}\rceil+1\ldots,I_3$}{
$\widehat{\underline{\mathbf C}}\left( {:,:,i} \right)={\rm conj}(\widehat{\underline{\mathbf C}}\left( {:,:,I_3-i+2} \right))$;
}
$\underline{\mathbf C} = {\rm ifft}\left( {\widehat{\underline{\mathbf C}},[],3} \right)$;   
       	}       	
\end{algorithm}

\RestyleAlgo{ruled}
\LinesNumbered
\begin{algorithm}
\SetKwInOut{Input}{Input}
\SetKwInOut{Output}{Output}\Input{The data tensor $\underline{\mathbf X} \in {\mathbb{R}^{{I_1} \times {I_2} \times {I_3}}}$} 
\Output{Moore-Penrose pseudoinvers $\underline{\mathbf X}^{\dag}\in\mathbb{R}^{I_2\times I_1\times I_3}$}
\caption{Fast Moore-Penrose pseudoinverse computation of the tensor $\underline{\bf X}$}\label{ALG:Moore-Penrose}
      {
      $\widehat{\underline{\mathbf X}} = {\rm fft}\left( {\underline{\mathbf X},[],3} \right)$;\\
\For{$i=1,2,\ldots,\lceil \frac{I_3+1}{2}\rceil$}
{                        
$\widehat{\underline{\mathbf C}}\left( {:,:,i} \right) = {\rm pinv}\,(\widehat{\X}(:,:,i))$;\\
}
\For{$i=\lceil\frac{I_3+1}{2}\rceil+1,\ldots,I_3$}{
$\widehat{\underline{\mathbf C}}\left( {:,:,i} \right)={\rm conj}(\widehat{\underline{\mathbf C}}\left( {:,:,I_3-i+2} \right))$;
}
$\underline{\mathbf X}^{\dag} = {\rm ifft}\left( {\widehat{\underline{\mathbf C}},[],3} \right)$;   
       	}       	
\end{algorithm}

It can be proven that for a tensor $\X\in\mathbb{R}^{I_1\times I_2\times I_3}$, we have 
\begin{eqnarray}\label{eq_fou}
\|\underline{\X}\|^2_F=\frac{1}{I_3}\sum_{i=1}^{I_3}\|\widehat{\underline{\X}}(:,:,i)\|_F^2,
\end{eqnarray}
where $\widehat{\underline{\X}}(:,:,i)$ is the $i$-th frontal slice of the tensor $\widehat{\underline{\X}}={\rm fft}(\underline{\X},[],3)$, see \cite{lu2019tensor,zhang2018randomized}.

\subsection{Tensor SVD (T-SVD) and Tensor QR (T-QR) decomposition}\label{Sec:tSVD}
The classical matrix decompositions such as QR, LU, and SVD can be straightforwardly generalized to tenors based on the T-product. Given a tensor $\underline{\X}\in\mathbb{R}^{I_1\times I_2\times I_3},$ the tensor QR (T-QR) decomposition represents the tensor $\underline{\bf X}$ as $\underline{\X}=\underline{\Q}*\underline{\R}$ and can be computed through Algorithm \ref{ALG:TQR}. By a slight modification of Algorithm \ref{ALG:TQR}, the tensor LU (T-LU) decomposition and the tensor SVD (T-SVD) can be computed. More precisely, in line 3 of Algorithm \ref{ALG:TQR}, we replace the LU decomposition and the SVD of frontal slices $\widehat{\underline{\X}}(:,:,i),\,i=1,2,\ldots,I_3,$ instead of the QR decomposition. 

The T-SVD expresses a tensor as the T-product of three tensors. The first and last tensors are orthogonal while the middle tensor is an f-diagonal tensor. Let $\underline{\X}\in\mathbb{R}^{I_1\times I_2\times I_3}$, then the T-SVD gives the following model:
\[
\underline{\X}=\cU*\underline{\bf S}*\cV^T,
\]
where $\cU\in\mathbb{R}^{I_1\times I_1\times I_3 },\,\underline{\bf S}\in\mathbb{R}^{I_1\times I_2\times I_3},$ and $\cV\in\mathbb{R}^{I_2\times I_2 \times I_3}$. The tensors $\cU$ and $\cV$ are orthogonal, while the tensor $\underline{\bf S}$ is f-diagonal. We can define the truncated T-SVD by truncating the factor tensors. More precisely, for a given tubal rank $R$, we have 
\[
{\underline{\X}}\approx\widetilde{\underline{\X}}=\cU_R*\underline{\bf S}_R*\cV_R^T,
\]
where $\cU_R=\cU(:,1:R,:),\underline{\bf S}_R=\underline{\bf S}(1:R,1:R,:)\,\cV_R=\cV(:,1:R,:),$
The generalization of the T-SVD to tensors of order higher than three is done in \cite{martin2013order}. The truncated T-SVD can be computed via Algorithm \ref{ALG:Tsvd}. 


\RestyleAlgo{ruled}
\LinesNumbered
\begin{algorithm}
\SetKwInOut{Input}{Input}
\SetKwInOut{Output}{Output}\Input{The data tensor $\underline{\mathbf X} \in {\mathbb{R}^{{I_1} \times {I_2} \times {I_3}}}$} 
\Output{The T-QR decomposition of the tensor $\underline{\bf X}=\underline{\bf Q}*\underline{\bf R}$}
\caption{The T-QR decomposition of the tensor $\underline{\bf X}$}\label{ALG:TQR}
      {
      $\widehat{\underline{\mathbf X}} = {\rm fft}\left( {\underline{\mathbf X},[],3} \right)$;\\
\For{$i=1,2,\ldots,\lceil \frac{I_3+1}{2}\rceil$}
{                        
$[\widehat{\underline{\mathbf Q}}\left( {:,:,i} \right),\underline{\widehat{\R}}(:,:,i)] = {\rm qr}\,(\underline{\widehat{\X}}(:,:,i),0)$;\\
}
\For{$i=\lceil\frac{I_3+1}{2}\rceil+1\ldots,I_3$}{
$\widehat{\underline{\mathbf Q}}\left( {:,:,i} \right)={\rm conj}(\widehat{\underline{\mathbf Q}}\left( {:,:,I_3-i+2} \right))$;\\
$\widehat{\underline{\mathbf R}}\left( {:,:,i} \right)={\rm conj}(\widehat{\underline{\mathbf R}}\left( {:,:,I_3-i+2} \right))$;
}
$\underline{\mathbf Q}= {\rm ifft}\left( {\widehat{\underline{\mathbf Q}},[],3} \right)$;\\
$\underline{\mathbf R}= {\rm ifft}\left( {\widehat{\underline{\mathbf R}},[],3} \right)$; 
       	}       	
\end{algorithm}

\RestyleAlgo{ruled}
\LinesNumbered
\begin{algorithm}
\SetKwInOut{Input}{Input}
\SetKwInOut{Output}{Output}\Input{The data tensor $\underline{\mathbf X} \in {\mathbb{R}^{{I_1} \times {I_2} \times {I_3}}}$ and a target tubal rank $R$} 
\Output{The truncated T-SVD of the tensor $\cX\approx\underline{\bf U}_R*\underline{\bf S}_R*\underline{\bf V}_R^T$}
\caption{The truncated T-SVD decomposition of the tensor $\underline{\bf X}$}\label{ALG:Tsvd}
      {
      $\widehat{\underline{\mathbf X}} = {\rm fft}\left( {\underline{\mathbf X},[],3} \right)$;\\
\For{$i=1,2,\ldots,\lceil \frac{I_3+1}{2}\rceil$}
{                        
$[\widehat{\underline{\mathbf U}}\left( {:,:,i} \right),\widehat{\underline{\bf  S}}(:,:,i),\widehat{\underline{\V}}(:,:,i)] = {\rm svds}\,(\widehat{\underline{\X}}(:,:,i),R)$;\\
}
\For{$i=\lceil\frac{I_3+1}{2}\rceil+1,\ldots,I_3$}{
$\widehat{\underline{\mathbf U}}\left( {:,:,i} \right)={\rm conj}(\widehat{\underline{\mathbf U}}\left( {:,:,I_3-i+2} \right))$;\\
$\widehat{\underline{\mathbf S}}\left( {:,:,i} \right)=\widehat{\underline{\mathbf S}}\left( {:,:,I_3-i+2} \right)$;\\
$\widehat{\underline{\mathbf V}}\left( {:,:,i} \right)={\rm conj}(\widehat{\underline{\mathbf V}}\left( {:,:,I_3-i+2} \right))$;
}
$\underline{\mathbf U}_R= {\rm ifft}\left( {\widehat{\underline{\mathbf U}},[],3} \right)$;
$\underline{\mathbf S}_R= {\rm ifft}\left( {\widehat{\underline{\mathbf S}},[],3} \right)$; 
$\underline{\mathbf V}_R= {\rm ifft}\left( {\widehat{\underline{\mathbf V}},[],3} \right)$;
       	}       	
\end{algorithm}

\section{Generalized singular value decomposition (GSVD) and its extension to tensors based on the T-product (GTSVD)}\label{Sec:GTSVD}
In this section, the GSVD and its extension to tensors based on the T-product are introduced. The GSVD is a generalized version of the classical SVD, which is applied to a pair of matrices. The SVD was generalized in \cite{van1976generalizing} from two different perspectives. More precisely, from the SVD, it is known that each matrix $\X\in\mathbb{R}^{I_1\times I_2}$ can be decomposed in the form $\X=\U\tS\V^T$ where $\U\in\mathbb{R}^{I_1\times I_1}$ and $\V\in\mathbb{R}^{I_2\times I_2}$ are orthogonal matrices and the $\tS={\rm diag}(\sigma_1,\sigma_2,\ldots,\sigma_{\min\{I_1,I_2\}})\in\mathbb{R}^{I_1\times I_2}$ is a diagonal matrix with singular values $\sigma_1\geq\sigma_2\geq\ldots\geq\sigma_R>\sigma_{R+1}=\ldots=\sigma_{\min\{I_1,I_2\}}=0$ and ${\rm rank}(\X)=R.$ Denoting the set of singular values of the matrix $\X$ as $\sigma(\X)=\{\sigma_1,\sigma_2,\ldots,\sigma_{\min\{I_1,I_2\}}\}$, it is known that 
\begin{eqnarray}\label{Def1}
\sigma_i\in\sigma(\X) \longrightarrow {\rm det}(\X^T\X-\sigma_i^2{\bf I})=0,\\\label{Def2}
\sigma_i\in\sigma(\X) \longrightarrow \sigma_i \textrm{\,is a stationary point of\,} \frac{\|\X{\bf z}\|_2}{\|{\bf z}\|_2}.
\end{eqnarray}
Based on \eqref{Def1} and \eqref{Def2}, the SVD was generalized in the following straightforward ways:
\begin{eqnarray}\label{Def_1}
\textrm{Find\,}\sigma\geq 0\,\textrm{such that\,\,} {\rm det}(\X^T\X-\sigma^2\Y^T\Y)=0,\\\label{Def_2}
\textrm{Find the stationary values of\,} \frac{\|\X{\bf z}\|_{\bf S}}{\|{\bf z}\|_{\bf T}},
\end{eqnarray}
where $\Y\in\mathbb{R}^{I_3\times I_2}$ is an arbitrary matrix and ${\bf S}\in\mathbb{R}^{I_2\times I_2}$ and ${\bf T}\in\mathbb{R}^{I_1\times I_1}$ are positive definitive matrices. In this paper, we only consider the generalization of form \eqref{Def_1} and its extension to tensors based on the T-product, see \cite{van1976generalizing} for details about GSVD with formulation \eqref{Def_2}. To this end, let us denote the set of all points satisfying \eqref{Def_1} as $\sigma(\X,\Y)=\{\sigma|\sigma\geq 0,\,{\rm det }(\X^T\X-\sigma^2\Y^T\Y)=0\},$ which are called $\Y$-singular values of the matrix $\X$. It was shown in \cite{van1976generalizing} that given $\X\in\mathbb{R}^{I_1\times I_2},\,\,I_1\geq I_2$ and $\Y\in\mathbb{R}^{I_3\times I_2},\,I_3\geq I_2$ there exist orthogonal matrices $\U\in\mathbb{R}^{I_1\times I_1},\,\V\in\mathbb{R}^{I_3\times I_3}$ and a nonsingular matrix $\Z\in\mathbb{R}^{I_2\times I_2}$ such that  
\begin{eqnarray}\label{gs_1}
    \underbrace{\U^T}_{I_1\times I_1}\underbrace{\X}_{I_1\times I_2}\underbrace{\Z}_{I_2\times I_2}&=&\underbrace{{\rm diag}(\alpha_1,\cdots,\alpha_{I_2})}_{I_1\times I_2},\quad \alpha_i\in[0,1]\\\label{gs_2}
    \underbrace{\V^T}_{I_3 \times I_3}\underbrace{\Y}_{I_3\times I_2}\underbrace{\Z}_{I_2\times I_2}&=&\underbrace{{\rm diag}(\beta_1,\cdots,\beta_{I_2})}_{I_3 \times I_2},\quad\beta_i\in[0,1] 
\end{eqnarray}
where $\alpha_i^2+\beta_i^2=1$ with the ratios $\alpha_i/\beta_i$ of increasing order for $i=1,2,\ldots,I_2$. The quantities $\sigma_i=\frac{\alpha_i}{\beta_i},$ which are the eigenvalues of the symmetric pencil matrix $\X^T\X-\sigma_i\Y^T\Y$ are called the generalized singular values.

A more generalized version of the SVD was proposed in \cite{paige1981towards} where a computationally stable algorithm was also developed to compute it. In the following, the latter GSVD is introduced, which will be considered in our paper. 
\begin{thm} \cite{paige1981towards}
Let two matrices $\X\in\mathbb{R}^{I_1\times I_2}$ and $\Y\in\mathbb{R}^{I_3\times I_2}$ be given and assume that the SVD of the matrix $\C=\begin{bmatrix}
    \X\\
    \Y
\end{bmatrix}$ is
\begin{eqnarray}
    \E^T\C\Z=\begin{bmatrix}
        {\bf \Gamma} & {\bf 0}\\
        {\bf 0}&{\bf 0}
    \end{bmatrix},
\end{eqnarray}
with the unitary matrices $\E\in\mathbb{R}^{(I_1+I_3)\times( I_1+I_3)},\, \Z\in\mathbb{R}^{I_2\times I_2}$ and a diagonal matrix ${\bf \Gamma}\in\mathbb{R}^{k\times k}$. Here, $k={\rm rank}(\C)$. Then, there exist unitary matrices $\U\in\mathbb{R}^{I_1\times I_1},\,\V\in\mathbb{R}^{I_3\times I_3}$ and $\W\in\mathbb{R}^{k\times k}$ such that
\begin{eqnarray}\label{EW_R}
 \U^T\X\Z={\bf \Sigma}_{\bf X}(\W^T{\bf\Gamma},{\bf 0}),\quad \V^T\Y\Z={\bf \Sigma}_{\bf Y}(\W^T{\bf \Gamma},{\bf 0}),   
\end{eqnarray}
where ${\bf \Sigma}_{\bf X}\in\mathbb{R}^{I_1\times k}$ and ${\bf \Sigma}_{\bf Y}\in\mathbb{R}^{I_3\times k}$ are defined as follows:
\begin{eqnarray}
{\bf \Sigma}_{\bf X}=\begin{bmatrix}
    {\bf I}_{\bf X} & &\\
    & {\bf S}_{\bf X}&\\
    && {\bf 0}_{\bf X}
\end{bmatrix},\quad {\bf \Sigma}_{\bf Y}=\begin{bmatrix}
    {\bf 0}_{\bf Y} & &\\
    & {\bf S}_{\bf Y}&\\
    && {\bf I}_{\bf Y}
\end{bmatrix}.      
\end{eqnarray}
Here, ${\bf I}_{\bf X}\in\mathbb{R}^{c\times c}$ and ${\bf I}_{\bf Y}\in\mathbb{R}^{(k-c-d)\times (k-c-d)}$ are identity matrices, ${\bf 0}_{\bf X}\in\mathbb{R}^{(I_1-c-d)\times(k-c-d)}$ and ${\bf 0}_{\bf Y}\in\mathbb{R}^{(I_3-k+c)\times c}$ are zero matrices that may have no columns/rows and ${\bf S}_{\bf X}\in\mathbb{R}^{d\times d},\,{\bf S}_{\bf Y}\in\mathbb{R}^{d\times d}$ are diagonal matrices with diagonal elements $1>\alpha_{c+1}\geq\cdots\geq\alpha_{c+d}>0$ and $0<\beta_{c+1}\leq\cdots\leq\beta_{c+d}<1$, respectively and $\alpha_i^2+\beta_i^2=1$ for $c+1\leq i\leq c+d$. Note that $c$ and $d$ are internally defined by the matrices $\X$ and $\Y$.
\end{thm}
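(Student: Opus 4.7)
The plan is to derive the factorizations \eqref{EW_R} from the SVD of the stacked matrix $\C=\begin{bmatrix}\X\\\Y\end{bmatrix}$ by applying the CS decomposition to a suitable pair of blocks extracted from the left singular vector matrix $\E$. First I would rewrite the hypothesis as $\C\Z = \E\begin{bmatrix}{\bf\Gamma} & {\bf 0}\\ {\bf 0}&{\bf 0}\end{bmatrix}$ and partition the first $k$ columns of $\E$ conformably with the block rows of $\C$, writing $\E_{1} = \begin{bmatrix}\E_{11}\\ \E_{21}\end{bmatrix}$ with $\E_{11}\in\mathbb{R}^{I_1\times k}$ and $\E_{21}\in\mathbb{R}^{I_3\times k}$. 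Reading off the block rows of $\C\Z$ then yields $\X\Z = [\E_{11}{\bf\Gamma},\ {\bf 0}]$ and $\Y\Z = [\E_{21}{\bf\Gamma},\ {\bf 0}]$, where the trailing zero blocks have $I_2-k$ columns.

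Next I would exploit the orthogonality of $\E$: since its first $k$ columns are orthonormal, one has $\E_{11}^{T}\E_{11}+\E_{21}^{T}\E_{21}={\bf I}_{k}$, which is exactly the hypothesis required by the CS decomposition. Invoking that decomposition on the pair $(\E_{11},\E_{21})$ produces orthogonal matrices $\U\in\mathbb{R}^{I_1\times I_1}$, $\V\in\mathbb{R}^{I_3\times I_3}$ and $\W\in\mathbb{R}^{k\times k}$ such that $\U^{T}\E_{11}\W = {\bf\Sigma}_{\bf X}$ and $\V^{T}\E_{21}\W = {\bf\Sigma}_{\bf Y}$, with the two right-hand sides in precisely the promised block-diagonal form. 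The identity $\alpha_i^{2}+\beta_i^{2}=1$ is automatic, since the symmetric matrices $\E_{11}^{T}\E_{11}$ and $\E_{21}^{T}\E_{21}={\bf I}_k-\E_{11}^{T}\E_{11}$ commute and therefore admit a common orthogonal diagonalizer; the integers $c$ and $d$ are the dimensions of the eigenspaces of $\E_{11}^{T}\E_{11}$ for the eigenvalue $1$ and for eigenvalues in the open interval $(0,1)$, respectively, while $k-c-d$ corresponds to its null space.

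The remaining step is a one-line assembly. Using $\W\W^{T}={\bf I}_{k}$ gives $\U^{T}\E_{11} = {\bf\Sigma}_{\bf X}\W^{T}$, and substituting into the expression for $\X\Z$ yields
$$\U^{T}\X\Z \;=\; \U^{T}[\E_{11}{\bf\Gamma},\ {\bf 0}] \;=\; [{\bf\Sigma}_{\bf X}\W^{T}{\bf\Gamma},\ {\bf 0}] \;=\; {\bf\Sigma}_{\bf X}[\W^{T}{\bf\Gamma},\ {\bf 0}],$$
and the analogous calculation for $\Y$ gives $\V^{T}\Y\Z = {\bf\Sigma}_{\bf Y}[\W^{T}{\bf\Gamma},\ {\bf 0}]$, which is exactly \eqref{EW_R}.

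The main obstacle is the CS decomposition itself. If it is taken as a black box the proof collapses into the three short steps above; if a self-contained derivation is required, the work lies in simultaneously diagonalizing the pair of Gram matrices, extending the reduced orthogonal factors obtained on the column spaces of $\E_{11}$ and $\E_{21}$ to full orthogonal matrices of sizes $I_1$ and $I_3$ via unitary completion (which is what produces the \emph{rows} of zeros in ${\bf\Sigma}_{\bf X}$ and ${\bf\Sigma}_{\bf Y}$ of heights $I_1-c-d$ and $I_3-k+c$), and enforcing the nondecreasing ordering of $\beta_i/\alpha_i$ by a column permutation of $\W$. Some care is also needed with the boundary cases where eigenvalues of $\E_{11}^{T}\E_{11}$ coincide with $0$ or $1$, so that the block sizes $c$, $d$, and $k-c-d$ are correctly identified.
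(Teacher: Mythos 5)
Your argument is correct and is precisely the classical route of the cited reference \cite{paige1981towards}: extract $\X\Z=[\E_{11}{\bf\Gamma},{\bf 0}]$ and $\Y\Z=[\E_{21}{\bf\Gamma},{\bf 0}]$ from the SVD of the stacked matrix, apply the CS decomposition to the orthonormal-column block $\begin{bmatrix}\E_{11}\\ \E_{21}\end{bmatrix}$, and assemble. The paper itself states this theorem as an imported result without proof, so there is nothing to contrast; the only caveat is the one you already flag, namely that the substance (the block sizes $c$, $d$, the unitary completions producing the zero row blocks, and the ordering of the $\beta_i/\alpha_i$) lives inside the CS decomposition, which you invoke as a black box.
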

It is not difficult to check that \eqref{EW_R} is reduced to 
\begin{eqnarray}\label{red}
 \U^T\X\R^{-1}=({\bf \Sigma}_{\bf X},{\bf 0}),\quad \V^T\Y\R^{-1}=({\bf \Sigma}_{\bf Y},{\bf 0}),   
\end{eqnarray}
for ${\bf R}^{-1}$ defined as follows
\[
{\bf R}^{-1}={\bf Z}\begin{bmatrix} {\bf \Gamma}^{-1}{\bf W}&{\bf 0}\\
{\bf 0}& {\bf I}_{I_2-k}
\end{bmatrix},
\]
and if the matrix ${\bf C}$ is of full rank, then the zero blocks on the right-hand sides of \eqref{red} are removed. 
As we see, the first formulation \eqref{Def_1} of the GSVD deals with two matrices $\X$ and $\Y,$ and provides a decomposition of the form \eqref{red} with the same matrix ${\bf R}^{-1}$. The GSVD is a generalization of the SVD in the sense that if $\Y$ is an identity matrix, then the GSVD of $(\X,\Y)$ is the SVD of the matrix $\X$. Also, if $\Y$ is invertable, then the GSVD of $(\X,\Y)$ is the SVD of $\X\Y^{-1}$.

The GSVD can be analogously extended to tensors based on the T-product \cite{zhang2021cs}. Let $\underline{\bf X},\,\underline{\bf Y}$ be two given tensors with the same number of lateral slices. Then, the {\it Generalized tensor SVD} (GTSVD) decomposes the tensors $\underline{\bf X}\in\mathbb{R}^{I_1\times I_2 \times I_3},\,\,I_1\geq I_2,\,\underline{\bf Y}\in\mathbb{R}^{I_4\times I_2 \times I_3},\,I_4\geq I_2,$ jointly in the following form:
\begin{eqnarray}\label{GTSVD_F}
\underline{\bf X}&=&\underline{\bf U}*\underline{\bf C}*\underline{\bf Z},\\
\underline{\bf Y}&=&\underline{\bf V}*\underline{\bf S}*\underline{\bf Z},
\end{eqnarray}
where $\underline{\bf U}\in\mathbb{R}^{I_1\times I_1\times I_3 },\,\underline{\bf V}\in\mathbb{R}^{I_4 \times I_4\times I_3 },\,\underline{\bf C}\in\mathbb{R}^{I_1\times I_2\times I_3 },\,\underline{\bf S}\in\mathbb{R}^{I_4\times I_2\times I_3 },\,\underline{\bf Z}\in\mathbb{R}^{I_2\times I_2\times I_3}$. Note that the tensors $\underline{\bf C}$ and $\underline{\bf S}$ are f-diagonal and the tensors $\underline{\bf U}$ and $\underline{\bf V}$ are orthogonal and $\underline{\bf Z}$ is nonsingular. The procedure of the computation of the GTSVD is presented in Algorithm \ref{ALG:GTSVD}. We need to apply the classical GSVD (lines 3-5) to the first $\lceil \frac{I_3+1}{2}\rceil$ frontal slices of the tensors $\underline{\bf X}$ and $\underline{\bf Y}$ in the Fourier domain and the rest of the slices are computed easily (Lines 6-12). We use the expression $[{\underline{\bf U}},{\underline{\bf V}},\underline{\bf C},\underline{\bf S},\underline{\bf Z}]={\rm GTSVD}(\underline{\bf X},\,\underline{\bf Y})$, to denote the GTSVD of a tensor pair $(\underline{\bf X},\underline{\bf Y})$. The truncated GTSVD is defined by truncating the factor tensors ${\underline{\bf U}},{\underline{\bf V}},\underline{\bf C},\underline{\bf S},\underline{\bf Z}$, similar to the truncated T-SVD.

The computation of the GSVD or the GTSVD for large-scale matrices/tensors involves the computation of the SVD of some large matrices. So, it is computationally demanding and requires huge memory and resources. In recent years, the idea of randomization has been utilized to accelerate the computation of the GSVD. Motivated by these progresses, we develop fast randomized algorithms for the computation of the GTSVD in the next section.


\RestyleAlgo{ruled}
\LinesNumbered
\begin{algorithm}
\SetKwInOut{Input}{Input}
\SetKwInOut{Output}{Output}\Input{The data tensors $\underline{\mathbf X} \in {\mathbb{R}^{{I_1} \times {I_2} \times {I_3}}}$ and $\underline{\mathbf Y} \in {\mathbb{R}^{{I_4} \times {I_2} \times {I_3}}}$} 
\Output{The generalized T-SVD (GTSVD) of $\underline{\mathbf X}$ and $\underline{\bf Y}$ as $\underline{\mathbf X}=\underline{\mathbf U}*\underline{\mathbf C}*\underline{\mathbf Z}$ and $\underline{\mathbf Y}=\underline{\mathbf V}*\underline{\mathbf S}*\underline{\mathbf Z}$}
\caption{Generalized T-SVD of $\underline{\bf X}$ and $\underline{\bf Y}$}\label{ALG:GTSVD}
      {
      $\widehat{\underline{\mathbf X}} = {\rm fft}\left( {\underline{\mathbf X},[],3} \right)$;\\
        $\widehat{\underline{\mathbf Y}} = {\rm fft}\left( {\underline{\mathbf Y},[],3} \right)$;\\
\For{$i=1,2,\ldots,\lceil \frac{I_3+1}{2}\rceil$}
{                        
$[\widehat{\underline{\mathbf U}}_i,\,\widehat{\underline{\mathbf V}}_i,\,\widehat{\underline{\mathbf Z}}_i,\,\widehat{\underline{\mathbf C}}_i,\,\widehat{\underline{\mathbf S}}_i]= {\rm GSVD}\,(\widehat{\underline{\X}}(:,:,i),\widehat{\underline{\Y}}(:,:,i))$;\\
}
\For{$i=\lceil\frac{I_3+1}{2}\rceil+1\ldots,I_3$}{
$\widehat{\underline{\mathbf U}}_i={\rm conj}(\widehat{\underline{\mathbf U}}_{I_3-i+2})$;\\
$\widehat{\underline{\mathbf V}}_i={\rm conj}({\underline{\mathbf V}}_{I_3-i+2})$;\\
$\widehat{\underline{\mathbf Z}}_i={\rm conj}(\widehat{\underline{\mathbf Z}}_{I_3-i+2})$;\\
$\widehat{\underline{\mathbf C}}_i=\widehat{\underline{\mathbf C}}_{I_3-i+2}$;\\
$\widehat{\underline{\mathbf S}}_i=\widehat{\underline{\mathbf S}}_{I_3-i+2}$;
}
${\underline{\mathbf U}} = {\rm ifft}\left( {\widehat{\underline{\mathbf U}},[],3} \right)$;
$\,{\underline{\mathbf V}} = {\rm ifft}\left( {\widehat{\underline{\mathbf V}},[],3} \right)$; 
$\,{\underline{\mathbf Z}} = {\rm ifft}\left( {\widehat{\underline{\mathbf Z}},[],3} \right)$;
$\,{\underline{\mathbf C}} = {\rm ifft}\left( {\widehat{\underline{\mathbf C}},[],3} \right)$; 
$\,{\underline{\mathbf S}} = {\rm ifft}\left( {\widehat{\underline{\mathbf S}},[],3} \right)$; 
       	}       	
\end{algorithm}

\section{Proposed fast randomized algorithms for computation of the GTSVD}\label{Sec:RGTSVD}
In this section, we propose two randomized variants of the GTSVD Algorithm \ref{ALG:GTSVD}. Let us start with the proposed randomized algorithm in \cite{wei2021randomized} to compute a GSVD of a matrix pair $(\X,\Y)$. A randomized method for the GSVD of form \eqref{Def_1} was suggested in \cite{wei2021randomized}, whereas a randomized algorithm for the GSVD of form \ref{Def_2} was proposed in \cite{saibaba2021randomized}. The key idea is to employ the random projection method for fast computation of the SVD, which is required in the process of computing the GSVD. To be more precise, let us explain the randomized GSVD (R-GSVD) algorithm proposed in \cite{wei2021randomized}. Let $\X\in\mathbb{R}^{I_1\times I_2}$ and $\Y\in\mathbb{R}^{I_3\times I_2}$ be given matrices. The randomzied GSVD first captures the ranges of two matrices $\X$ and $\Y$ by multiplying them with two Gaussian matrices ${\bf\Omega}_1\in\mathbb{R}^{I_2\times (R+p_1)}$ and ${\bf \Omega}_2\in\mathbb{R}^{I_2\times (R+p_2)}$, as follows:
\begin{eqnarray}
\W_1=\X{\bf \Omega_1},\\
\W_2=\Y{\bf \Omega_2},
\end{eqnarray}
where $R$ is a given matrix rank and $p_1,\,p_2$ are the oversampling parameters. Then, orthonormal bases for the range of $\W_1$ and $\W_2$ are computed using the economic QR decomposition denoted by $\Q_1$ nd $\Q_2$, respectively. The deterministic GSVD algorithms are now applied to the matrix pair ($\Q_1^T\X,\Q_2^T\Y)$, with small sketches \footnote{We call it a compressed matrix pair.}, which are much smaller than original matrix pair $(\X,\Y),$ to get the GSVD factor matrices $\{\widetilde{\U},\widetilde{\V},\C,{\bf S},\Z\}$, that is
\begin{eqnarray}
\Q_1^T\X=\widetilde{\U}\C\Z,\quad \Q_2^T\Y=\widetilde{\V}{\bf S}\Z.
\end{eqnarray}
So, the GSVD of the original matrix pair $(\X,\Y)$ can be computed as follows
\begin{eqnarray}
\X=(\Q_1\widetilde{\U})\C\Z,\quad \Y=(\Q_2\widetilde{\V}){\bf S}\Z,
\end{eqnarray}
which means that the GSVD of the original matrix pair can be recovered from the the GSVD of the compressed matrix pair.
The reduction stage, helps to deal with smaller matrices and this idea totally speeds-up the computation process.

We follow the same idea and use the randomization framework to develop two fast randomized algorithms to compute GTSVD of a tensor pair $(\underline{\X},\underline{\bf Y})$. The first proposed randomized algorithm is a naive modification of Algorithm \ref{ALG:GTSVD} where we can replace the deterministic GSVD with the randomized counterpart developed in \cite{wei2021randomized}. This idea is presented in Algorithm \ref{ALG:RGTSVD1}. Here, we can use the oversampling and power iteration methods to improve the accuracy, when the singular values of the frontal slices do not decay sufficiently fast \cite{halko2011finding}. 

The second proposed randomized algorithm is presented in Algorithm \ref{ALG:RGTSVD2} where we first make a reduction on the given data tensors by multiplying them with random tensors to capture their important actions. Then, by applying the T-QR algorithm to the mentioned compressed tensors, we can obtain orthonormal bases for them (Line 3-4 in Algorithm \ref{ALG:RGTSVD2}), which are used to get small sketch tensors by projecting the original data tensors onto the compressed tensors \footnote{Here, the compressed tensors are the tensors $\underline{\bf Q}^T_{1}*\underline{\bf X}$ and $\underline{\bf Q}^T_{2}*\underline{\bf Y}$ in Lines 5-6 of Algorithm \ref{ALG:RGTSVD2}. We also call them the sketch tensors.}. Since the sizes of the sketch tensors are smaller than the original ones, the deterministic algorithms can be used to compute their GTSVD. Finally, the GTSVD of the original data tensors can be recovered from the GTSVD of the compressed tensors. Note that in Algorithms \ref{ALG:RGTSVD1} and \ref{ALG:RGTSVD2}, we need the tubal rank as input, however, this can be numerically estimated for a given approximation error bound. For example, for matrices, we can use the randomized fixed-precision developed in \cite{yu2018efficient} and for the case of tensors, the randomized rank-revealing algorithm proposed in \cite{ugwu2021tensor,ugwu2021viterative,ahmadi2022efficient} is applicable. Similar to Algorithm \ref{ALG:RGTSVD1}, if the frontal slices of a given tensor do not have a fast decay, the power iteration technique and the oversampling method should be used to better capture their ranges. More precisely, the random projection stages in Algorithm \ref{ALG:RGTSVD2} (Lines 1-2) are replaced with the following equations
\begin{eqnarray}\label{pi}
\underline{\bf W}_1=(\underline{\bf X}*\underline{\bf X}^T)^q*\underline{\bf X}*\underline{\bf \Omega}_1,\\\label{pi_2}
\underline{\bf W}_2=(\underline{\bf Y}*\underline{\bf Y}^T)^q*\underline{\bf Y}*\underline{\bf \Omega}_2.
\end{eqnarray}
In practice, for the formulations \eqref{pi}-\eqref{pi_2} to be stable, we employ the T-QR decomposition or the T-LU decomposition or a combination of them \cite{halko2011finding,ahmadi2022efficient,ahmadi2024randomized}.

\RestyleAlgo{ruled}
\LinesNumbered
\begin{algorithm}
\SetKwInOut{Input}{Input}
\SetKwInOut{Output}{Output}\Input{The data tensors $\underline{\mathbf X} \in {\mathbb{R}^{{I_1} \times {I_2} \times {I_3}}}$ and $\underline{\mathbf Y} \in {\mathbb{R}^{{I_4} \times {I_2} \times {I_3}}}$, standard Gaussian matrices with oversampling parameters $p_1$ and $p_2$} 
\Output{The GTSVD of $\underline{\mathbf X}$ and $\underline{\bf Y}$ as $\underline{\mathbf X}=\underline{\mathbf U}*\underline{\mathbf C}*\underline{\mathbf Z}$ and $\underline{\mathbf Y}=\underline{\mathbf V}*\underline{\mathbf S}*\underline{\mathbf Z}$}
\caption{The proposed randomized GTSVD I}\label{ALG:RGTSVD1}
      {
      $\widehat{\underline{\mathbf X}} = {\rm fft}\left( {\underline{\mathbf X},[],3} \right)$;\\
        $\widehat{\underline{\mathbf Y}} = {\rm fft}\left( {\underline{\mathbf Y},[],3} \right)$;\\
\For{$i=1,2,\ldots,\lceil \frac{I_3+1}{2}\rceil$}
{
                       
$[\widehat{\underline{\mathbf U}}_i,\,\widehat{\underline{\mathbf V}}_i,\,\widehat{\underline{\mathbf Z}}_i,\,\widehat{\underline{\mathbf C}}_i,\,\widehat{\underline{\mathbf S}}_i]= {\rm R\rnumber GSVD}\,(\widehat{\underline{\X}}(:,:,i),\widehat{\underline{\Y}}(:,:,i),p_1,p_2)$;\\
}
\For{$i=\lceil\frac{I_3+1}{2}\rceil+1\ldots,I_3$}{
$\widehat{\underline{\mathbf U}}_i={\rm conj}(\widehat{\underline{\mathbf U}}_{I_3-i+2})$;\\
$\widehat{\underline{\mathbf V}}_i={\rm conj}({\underline{\mathbf V}}_{I_3-i+2})$;\\
$\widehat{\underline{\mathbf Z}}_i={\rm conj}(\widehat{\underline{\mathbf Z}}_{I_3-i+2})$;\\
$\widehat{\underline{\mathbf C}}_i=\widehat{\underline{\mathbf C}}_{I_3-i+2}$;\\
$\widehat{\underline{\mathbf S}}_i=\widehat{\underline{\mathbf S}}_{I_3-i+2}$;
}
${\underline{\mathbf U}} = {\rm ifft}\left( {\widehat{\underline{\mathbf U}},[],3} \right)$;
$\,{\underline{\mathbf V}} = {\rm ifft}\left( {\widehat{\underline{\mathbf V}},[],3} \right)$; 
$\,{\underline{\mathbf Z}} = {\rm ifft}\left( {\widehat{\underline{\mathbf Z}},[],3} \right)$;
$\,{\underline{\mathbf C}} = {\rm ifft}\left( {\widehat{\underline{\mathbf C}},[],3} \right)$; 
$\,{\underline{\mathbf S}} = {\rm ifft}\left( {\widehat{\underline{\mathbf S}},[],3} \right)$; 
       	}       	
\end{algorithm}

\RestyleAlgo{ruled}
\LinesNumbered
\begin{algorithm}
\SetKwInOut{Input}{Input}
\SetKwInOut{Output}{Output}\Input{The data tensors $\underline{\mathbf X} \in {\mathbb{R}^{{I_1} \times {I_2} \times {I_3}}}$ and $\underline{\mathbf Y} \in {\mathbb{R}^{{I_4} \times {I_2} \times {I_3}}}$, standard Gaussian tensors $\underline{\bf \Omega}_1,\,\underline{\bf \Omega}_2$ with corresponding oversampling parameters $p_1$ and $p_2$} 
\Output{The GTSVD of $\underline{\mathbf X}$ and $\underline{\bf Y}$ as $\underline{\mathbf X}=\underline{\mathbf U}*\underline{\mathbf C}*\underline{\mathbf Z}$ and $\underline{\mathbf Y}=\underline{\mathbf V}*\underline{\mathbf S}*\underline{\mathbf Z}$}
\caption{The proposed randomized GTSVD II}\label{ALG:RGTSVD2}
      {
$\underline{\bf W}_1=\underline{\bf X}*\underline{\bf \Omega}_1$;\\
$\underline{\bf W}_2=\underline{\bf Y}*\underline{\bf \Omega}_2$;\\
$[\underline{\bf Q}_1,\sim]={\rm T}$-${\rm QR}(\underline{\bf W}_1)$;\\
$[\underline{\bf Q}_2,\sim]={\rm T}$-${\rm QR}(\underline{\bf W}_2)$;\\
$[\widehat{\underline{\bf U}},\widehat{\underline{\bf V}},\underline{\bf C},\underline{\bf S},\underline{\bf Z}]={\rm GTSVD}(\underline{\bf Q}_1^T*\underline{\bf X},\,\underline{\bf Q}_2^T*\underline{\bf Y})$;\\
${\underline{\bf U}}={\underline{\bf Q}}_1*\widehat{\underline{\bf U}}$;\\
${\underline{\bf V}}={\underline{\bf Q}}_2*\widehat{\underline{\bf V}}$;
       	}       	
\end{algorithm}

\section{Error Analysis}\label{Sec:error}
In this section, we provide the average/expected error bounds of the approximations obtained by the proposed randomized algorithms. 
Let us first partition the GSVD in \eqref{red} in the following form, where $\Z=\R^{-1}$
\begin{eqnarray}\label{partition}
\X=\U\begin{bmatrix}
    {\bf \Sigma}_{{\bf X}_1}& {\bf 0} & {\bf 0}\\
      {\bf 0}& {\bf \Sigma}_{{\bf X}_2}& {\bf 0}\\
\end{bmatrix}\begin{bmatrix}
    \Z_1\\
     \Z_2\\
     \Z_3
\end{bmatrix},\quad \Y=\V\begin{bmatrix}
   {\bf \Sigma}_{{\bf Y}_1}& {\bf 0} & {\bf 0}\\
      {\bf 0}& {\bf \Sigma}_{{\bf Y}_2}& {\bf 0}\\
\end{bmatrix}\begin{bmatrix}
    \widehat{\Z}_1\\
     \widehat{\Z}_2\\
     \Z_3
\end{bmatrix}      
\end{eqnarray}
where ${\bf \Sigma}_{{\bf X}_1}\in\mathbb{R}^{r_1\times r_1},\,{\bf \Sigma}_{{\bf X}_2}\in\mathbb{R}^{(I_1-r_1)\times(k-r_1)},\,{\bf \Sigma}_{{\bf Y}_1}\in\mathbb{R}^{(I_3-r_2)\times (k-r_1)},\,{\bf \Sigma}_{{\bf Y}_2}\in\mathbb{R}^{r_2\times r_2},\,\Z_1\in\mathbb{R}^{r_1\times I_2},\,\Z_2\in\mathbb{R}^{(k-r_1)\times I_2},\,\widehat{\Z}_1\in\mathbb{R}^{(k-r_2)\times I_3},\,\widehat{\Z}_2\in\mathbb{R}^{r_2\times I_2}$ and $\Z_3\in\mathbb{R}^{(I_2-k)\times I_2}$, for given numerical ranks $r_1,\,r_2$ and oversampling parameters $p_1$ and $p_2$. Also, consider the standard Gaussian matrices ${\bf \Phi}\in\mathbb{R}^{I_2\times (r_1+p_1)}$ and ${\bf \Psi}\in\mathbb{R}^{I_2\times (r_2+p_2)}$. We first start with the following theorem that gives the average error bound of an approximation yielded by the random projection method for the computation of the GSVD \cite{wei2021randomized}. 

\begin{thm}\label{Thm1}
\cite{wei2021randomized} Given two matrices $\X\in\mathbb{R}^{I_1\times I_2},\,\Y\in\mathbb{R}^{I_3\times I_2}$ with the GSVD \eqref{gs_1}-\eqref{gs_2} and consider the target ranks $r_1,\,r_2\geq 2$ and the oversampling parameters $p_1,\,p_2\geq 2$ ($r_1+p_1\leq\min({I_1,I_2}),\,r_2+p_2\leq\min({I_3,I_2}))$. Assume that two standard random matrices ${\bf \Phi}\in\mathbb{R}^{I_2\times (r_1+p_1)}$ and ${\bf \Psi}\in\mathbb{R}^{I_2\times (r_2+p_2)}$ are used to capture the range of the matrices $\X,\,\Y$ using $\M=\X\Phi,\,\N=\Y\Psi,$ then 
\begin{eqnarray}
\mathbb{E}(\|({\bf I}-{\bf M}{\bf M}^{\dag})\X\|_F)\leq {\eta^{\bf X}_1}{\alpha}_{r_1+1}+\eta^{{\bf X}}_2\sqrt{\sum_{j>r_1}\alpha_j^2},\\
\mathbb{E}(\|({\bf I}-{\bf N}{\bf N}^{\dag})\Y\|_F)\leq \eta^{{\bf Y}}_1{\beta}_{k-r_2}+\eta^{{\bf Y}}_2\sqrt{\sum_{j\leq k-r_2}\beta_j^2},
\end{eqnarray}
where 
\begin{eqnarray}\label{tr1}
\eta^{\bf X}_1=\|\Z\|\left(1+\frac{\sigma_1(\Z_2)}{\sigma_{r_1}(\Z_1)}\right)+\sqrt{\frac{r_1}{p_1-1}\sum_{j=1}^{r_1}\frac{\sigma_1^2(\Z_2)}{\sigma_j^2(\Z_1)}},\,\,\eta^{\bf X}_2=\|\Z\|\frac{\|\Z_2\|_F}{\sigma_{r_1}(\Z_1)}\frac{e\sqrt{r_1+p_1}}{p_1},  \\\label{tr2}
\eta^{\bf Y}_1=\|\Z\|\left(1+\frac{\sigma_1(\widehat{\Z}_1)}{\sigma_{r_2}(\widehat{\Z}_2)}\right)+\sqrt{\frac{r_2}{p_2-1}\sum_{j=1}^{r_2}\frac{\sigma_1^2(\widehat{\Z}_1)}{\sigma_j^2(\widehat{\Z}_2)}},\,\,\eta^{\bf Y}_2=\|\Z\|\frac{\|\widehat{\Z}_1\|_F}{\sigma_{r_2}(\widehat{\Z}_2)}\frac{e\sqrt{r_2+p_2}}{p_2},  
\end{eqnarray}
and $\Z_1,\,\widehat{\Z}_1,\,\Z_2,\,\widehat{\Z}_2$ are defined in \eqref{partition}.
\end{thm}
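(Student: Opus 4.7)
The plan is to mimic the randomized‐SVD analysis of Halko–Martinsson–Tropp, with the extra twist that for the GSVD the ``right singular vectors'' are rows of the nonsingular matrix $\Z$ rather than of an orthogonal matrix. I will carry out the argument for $\X$; the bound for $\Y$ is entirely analogous after swapping the roles of leading and trailing blocks (which is why the second inequality involves $\beta_{k-r_2}$ and the tail $\sum_{j\le k-r_2}\beta_j^2$).

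First, I would start from the block partitioning \eqref{partition} and write
\[
\M=\X\Phi=\U\,{\bf \Sigma}_{{\bf X}_1}\Z_1\Phi+\U\,{\bf \Sigma}_{{\bf X}_2}\Z_2\Phi.
\]
Setting $\Omega_1:=\Z_1\Phi$ and $\Omega_2:=\Z_2\Phi$, the standard deterministic projection lemma of HMT gives
\[
\|({\bf I}-\M\M^{\dag})\X\|\;\le\;\|\U\,{\bf \Sigma}_{{\bf X}_2}\Z_3\|\;+\;\text{(a term involving }\Omega_2\Omega_1^{\dag}\text{)},
\]
provided $\Omega_1$ has full row rank. Because $\U$ is orthogonal but $\Z$ is only nonsingular, the factor $\|\Z\|$ will appear as soon as one pulls the trailing block back into the norm, which accounts for the $\|\Z\|$ that shows up in \eqref{tr1}–\eqref{tr2}.

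Next I would exploit Gaussianity of $\Phi$. Although $\Z_1$ and $\Z_2$ are deterministic and possibly ill-conditioned, the matrices $\Omega_1$ and $\Omega_2$ are still Gaussian in the sense that they have the form $\Z_j\Phi$ with $\Phi$ standard normal. The classical tools (HMT Propositions 10.1 and 10.2) give
\[
\mathbb{E}\|\Omega_1^{\dag}\|_F^{\,2}\;\le\;\frac{r_1}{p_1-1}\sum_{j=1}^{r_1}\frac{1}{\sigma_j^{\,2}(\Z_1)},\qquad
\mathbb{E}\|\Omega_2\|\;\le\;\|\Z_2\|\Bigl(1+\sqrt{r_1/(p_1-1)}\Bigr)+\frac{e\sqrt{r_1+p_1}}{p_1}\|\Z_2\|_F,
\]
after a change of basis that reduces $\Z_j\Phi$ to a product of a deterministic matrix with a standard Gaussian. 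Splitting the deterministic bound into a ``spectral'' piece and a ``Frobenius'' piece and applying these expectations term by term, together with Hölder/Jensen to move $\mathbb{E}$ inside the product $\|\Omega_2\Omega_1^{\dag}\|$, produces exactly the two summands $\eta^{\X}_1\alpha_{r_1+1}$ and $\eta^{\X}_2\bigl(\sum_{j>r_1}\alpha_j^2\bigr)^{1/2}$: the first summand bundles the ``worst singular value past the cut'' with the multiplicative constants coming from $\sigma_1(\Z_2)/\sigma_{r_1}(\Z_1)$, and the second bundles the tail of $\{\alpha_j\}$ with the Frobenius-type constant $e\sqrt{r_1+p_1}/p_1$.

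The bound for $\Y$ is obtained by repeating the argument on the partition of $\Y$ in \eqref{partition}, using $\widehat{\Z}_1,\widehat{\Z}_2$ in place of $\Z_1,\Z_2$ and reversing the ordering, since the $\beta_j$ are arranged in nondecreasing order; the trailing block role is now played by the first $k-r_2$ indices, which is why the expression features $\beta_{k-r_2}$ and the sum over $j\le k-r_2$. The main technical obstacle is the non-orthogonality of $\Z$: one must track carefully how a Gaussian $\Phi$ is distorted into $\Z_j\Phi$, ensure that the change-of-basis step really reduces the analysis to the standard HMT setting, and keep the resulting condition numbers $\sigma_1(\Z_2)/\sigma_{r_1}(\Z_1)$ and $\|\widehat{\Z}_1\|_F/\sigma_{r_2}(\widehat{\Z}_2)$ visible in the final constants rather than absorbing them prematurely into generic norms. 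Everything else is routine Gaussian matrix algebra.
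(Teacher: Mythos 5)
The paper itself contains no proof of this theorem: it is imported verbatim from \cite{wei2021randomized}, so there is no in-paper argument to compare against. Your sketch reproduces the strategy of that reference (and of Halko--Martinsson--Tropp): partition via the GSVD as in \eqref{partition}, apply the deterministic projection bound with $\Omega_1=\Z_1\Phi$ and $\Omega_2=\Z_2\Phi$, extract the factor $\|\Z\|$ when passing between $\X$ and $\X\Z^{-1}$, and then take Gaussian expectations. At that level the plan is the right one.

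Two concrete problems remain. First, the deterministic bound you write down is malformed: under the partition \eqref{partition} the product ${\bf \Sigma}_{{\bf X}_2}\Z_3$ does not exist (${\bf \Sigma}_{{\bf X}_2}$ has $k-r_1$ columns while $\Z_3$ has $I_2-k$ rows), and $\Z_3$ multiplies only a zero block of the middle factor, so it cannot enter the residual at all; the trailing term must be built from ${\bf \Sigma}_{{\bf X}_2}\Z_2$. Second, and more seriously, the phrase \enquote{apply these expectations term by term, together with H\"older/Jensen} hides the one place where the HMT template genuinely breaks. In HMT the expectation of $\|{\bf \Sigma}_2\Omega_2\Omega_1^{\dag}\|$ is computed by conditioning on $\Omega_1$ and averaging over $\Omega_2$, which is legitimate only because $\V_1^{*}\Phi$ and $\V_2^{*}\Phi$ are independent when the right factor is orthogonal. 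Here $\Omega_1=\Z_1\Phi$ and $\Omega_2=\Z_2\Phi$ are correlated Gaussians since $\Z_1\Z_2^{T}\neq{\bf 0}$ in general, so the conditioning argument is unavailable; your change of basis reduces each of $\Z_1\Phi$ and $\Z_2\Phi$ separately to a deterministic matrix times a standard Gaussian, but it cannot do so simultaneously. One must instead decouple deterministically, e.g. via $\|\Z_2\Phi(\Z_1\Phi)^{\dag}\|\leq\|\Z_2\Phi\|\,\|(\Z_1\Phi)^{\dag}\|$ followed by a Cauchy--Schwarz split of the expectation, and it is precisely this decoupling that generates the ratios $\sigma_1(\Z_2)/\sigma_{r_1}(\Z_1)$ and $\|\Z_2\|_F/\sigma_{r_1}(\Z_1)$ appearing in \eqref{tr1}--\eqref{tr2}; until that step is made explicit, the constants $\eta_1^{\bf X}$ and $\eta_2^{\bf X}$ are not actually derived. (A minor further slip: the display you label as a bound on $\mathbb{E}\|\Omega_2\|$ cannot be one, since its right-hand side already carries the pseudoinverse constant $e\sqrt{r_1+p_1}/p_1$; it is a bound on $\mathbb{E}\|\Omega_2\Omega_1^{\dag}\|$.)
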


The average error bounds of the approximations obtained by Algorithms \ref{ALG:RGTSVD1}-\ref{ALG:RGTSVD2} are provided in Theorem \ref{Thm2}.

\begin{thm}\label{Thm2}
Let $\underline{\mathbf X} \in {\mathbb{R}^{{I_1} \times {I_2} \times {I_3}}}$ and $\underline{\mathbf Y} \in {\mathbb{R}^{{I_4} \times {I_2} \times {I_3}}}$ be given data tensors with the GTSVD \eqref{GTSVD_F}. Assume we use the standard random tensors $\underline{\bf \Omega}_1\in\mathbb{R}^{I_2\times (r_1+p_1)\times I_3 },\,\underline{\bf \Omega}_2\in\mathbb{R}^{I_2\times (r_2+p_2)\times I_3 }$ for the reduction stage and let their compressed tensors be
\[
\underline{\bf W}_1=\underline{\bf X}*\underline{\bf \Omega}_1,\quad
\underline{\bf W}_2=\underline{\bf Y}*\underline{\bf \Omega}_2,
\]
where $r_1,\,r_2\geq 2$ are the target tubal ranks and $p_1,\,p_2\geq 2$ are the oversampling parameters. Then, the GTSVD computed by Algorithms \ref{ALG:RGTSVD1}-\ref{ALG:RGTSVD2} provide the solutions with the following accuracies
\begin{eqnarray}\label{err1}
\mathbb{E}(\|(\underline{\bf I}-{\underline{\bf W}_1}*{\underline{\bf W}_1^{\dag}})*\underline{\X}\|_F)\leq \left(\frac{1}{I_3}\left(\sum_{i=1}^{I_3}\eta_1^{\widehat{\bf X}^{(i)}}{\alpha}^i_{r_1+1}+\eta_2^{\widehat{\bf X}^{(i)}}\sqrt{\sum_{j>r_1}({\alpha^i_j})^2}\right)\right)^{1/2},\\\label{err2}
\mathbb{E}(\|(\underline{\bf I}-{\underline{\bf W}_2}*{\underline{\bf W}_2^{\dag}})*\underline{\Y}\|_F)\leq \left(\frac{1}{I_3}\left(\sum_{i=1}^{I_3}\eta_1^{\widehat{\bf Y}^{(i)}}+\eta_2^{\widehat{\bf Y}^{(i)}}\sqrt{\sum_{j\leq k-r_2}({\beta^i_j})^2}\right)\right)^{1/2},
\end{eqnarray}
where, according to \eqref{partition}, the GSVDs of the frontal slices $\widehat{\bf X}^{(i)}=\widehat{\underline{\X}}(:,:,i)$ and $\widehat{\bf Y}^{(i)}=\widehat{\underline{\Y}}(:,:,i),\,\,i=1,2,\ldots,I_3$ are partitioned as follows:
\begin{eqnarray}\label{partition_1}
\widehat{\bf X}^{(i)}={\widehat\U}^{(i)}\begin{bmatrix}
    {\bf \Sigma}_{\widehat{\bf X}^{(i)}_1}& {\bf 0} & {\bf 0}\\
      {\bf 0}& {\bf \Sigma}_{\widehat{\bf X}^{(i)}_2}& {\bf 0}\\
\end{bmatrix}\begin{bmatrix}
    {\Z}^{{(i)}}_1\\
     {\Z}^{{(i)}}_2\\
     {\Z}^{{(i)}}_3
\end{bmatrix},\,\, \widehat{\bf Y}^{(i)}=\widehat{\V}^{(i)}\begin{bmatrix}
   {\bf \Sigma}_{\widehat{\bf Y}^{{(i)}}_1}& {\bf 0} & {\bf 0}\\
      {\bf 0}& {\bf \Sigma}_{\widehat{\bf Y}^{{(i)}}_2}& {\bf 0}\\
\end{bmatrix}\begin{bmatrix}
    \widehat{\Z}^{{(i)}}_1\\
     \widehat{\Z}^{{(i)}}_2\\
     \Z^{{(i)}}_3
\end{bmatrix},    
\end{eqnarray}
and $\widehat{\underline{\mathbf X}} = {\rm fft}\left( {\underline{\mathbf X},[],3} \right),\,\,\widehat{\underline{\mathbf Y}} = {\rm fft}\left( {\underline{\mathbf Y},[],3} \right)$. Here, the quantities $\eta_1^{\widehat{\bf X}^{(i)}},\,\eta_2^{\widehat{\bf X}^{(i)}} $ and $\eta_1^{\widehat{\bf Y}^{(i)}},\,\eta_2^{\widehat{\bf Y}^{(i)}}$ are defined analogously based on the matrices ${\bf Z}_1^{(i)},\,{\bf Z}_2^{(i)},\,{\bf Z}_3^{(i)},\,{\widehat{\bf Z}}_1^{(i)},$ and $\widehat{\bf Z}_2^{(i)},$ (replacing them in \eqref{tr1}-\eqref{tr2} instead of ${\bf Z}_1,\,{\bf Z}_2,\,{\bf Z}_3,\,{\widehat{\bf Z}}_1,\,\widehat{\bf Z}_2$). Also $\alpha_j^i,\,\beta_j^i,$ $i=1,2,\ldots,I_3,\,j=1,2,\ldots,I_2$ are the elements of the diagonal middle matrices obtained from the GSVD of the matrices $(\widehat{\bf X}^{(i)},\widehat{\bf Y}^{(i)})$. 
\end{thm}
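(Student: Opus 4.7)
The plan is to reduce the tensor-level error bound to the matrix-level randomized GSVD bound of Theorem~\ref{Thm1} by working slice-by-slice in the Fourier domain and then reassembling the per-slice bounds via Parseval's identity and Jensen's inequality.

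First I would invoke Parseval's identity for the DFT along the third mode,
$$
\|\underline{\bf A}\|_F^2 \;=\; \frac{1}{I_3}\sum_{i=1}^{I_3}\bigl\|\widehat{\underline{\bf A}}(:,:,i)\bigr\|_F^2,
$$
and apply it to the residual tensor $\underline{\bf E}_1 = (\underline{\bf I} - \underline{\bf W}_1 * \underline{\bf W}_1^{\dag}) * \underline{\bf X}$. Because the t-product, Moore--Penrose pseudoinverse, and identity tensor all diagonalize into block form under the DFT (the content of Algorithms~\ref{ALG:TSVDP} and~\ref{ALG:Moore-Penrose}), the $i$-th frontal slice of the Fourier-transformed residual becomes
$$
\widehat{\underline{\bf E}}_1(:,:,i) \;=\; \bigl(\mathbf{I} - \widehat{\underline{\bf W}}_1(:,:,i)\,\widehat{\underline{\bf W}}_1(:,:,i)^{\dag}\bigr)\,\widehat{\bf X}^{(i)},
$$
where $\widehat{\underline{\bf W}}_1(:,:,i) = \widehat{\bf X}^{(i)}\,\widehat{\underline{\bf \Omega}}_1(:,:,i)$. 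This places each slice in exactly the random-projection framework of Theorem~\ref{Thm1}, with the partitioned GSVD supplied by~\eqref{partition_1}.

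Next I would apply Theorem~\ref{Thm1} to each slice in the first half $i=1,\ldots,\lceil(I_3+1)/2\rceil$, producing a per-slice expected bound of the form $\eta_1^{\widehat{\bf X}^{(i)}}\alpha_{r_1+1}^i + \eta_2^{\widehat{\bf X}^{(i)}}\sqrt{\sum_{j>r_1}(\alpha_j^i)^2}$, with the quantities $\eta_1^{\widehat{\bf X}^{(i)}}, \eta_2^{\widehat{\bf X}^{(i)}}$ obtained from~\eqref{tr1} by substituting the slice-level blocks $\Z_1^{(i)}, \Z_2^{(i)}, \Z_3^{(i)}$. The remaining slices are conjugate-symmetric images of the first half and thus contribute identical Frobenius norms. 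Averaging with the $1/I_3$ Parseval weight, taking expectations, and invoking Jensen's inequality in the form $\mathbb{E}\|\underline{\bf E}_1\|_F \le \sqrt{\mathbb{E}\|\underline{\bf E}_1\|_F^2}$ then yields the stated bound~\eqref{err1}. The bound~\eqref{err2} for $\underline{\bf Y}$ follows verbatim after exchanging the roles of $(\underline{\bf X}, r_1, p_1, \alpha_j^i)$ with $(\underline{\bf Y}, r_2, p_2, \beta_j^i)$ and using the $\widehat{\bf Y}^{(i)}$ partition from~\eqref{partition_1}.

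The main obstacle I anticipate is the distribution of $\widehat{\underline{\bf \Omega}}_1(:,:,i)$ in the Fourier domain: since $\underline{\bf \Omega}_1$ is a real standard Gaussian tensor, its Fourier slices are complex Gaussian subject to the conjugate-symmetry constraint $\widehat{\underline{\bf \Omega}}_1(:,:,I_3-i+2)=\overline{\widehat{\underline{\bf \Omega}}_1(:,:,i)}$, so Theorem~\ref{Thm1} as stated for real standard Gaussian test matrices does not apply verbatim. I would address this either by invoking the complex-Gaussian analogue of Theorem~\ref{Thm1}, whose constants differ from the real case by only an $O(1)$ factor, for the independent first-half slices and then exploiting the equal-Frobenius-norm contribution of the constrained conjugate slices; or, as is more common in the randomized t-SVD literature, by drawing $\underline{\bf \Omega}_1$ directly as real Gaussian in the Fourier domain, in which case each $\widehat{\underline{\bf \Omega}}_1(:,:,i)$ is genuinely real Gaussian and Theorem~\ref{Thm1} applies without modification.
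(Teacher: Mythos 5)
Your proposal follows essentially the same route as the paper's proof: decompose the squared Frobenius norm of the residual into Fourier-domain frontal slices via Parseval's identity, apply Theorem~\ref{Thm1} slice-by-slice using the partition \eqref{partition_1}, and pass from $\mathbb{E}\|\cdot\|^2$ to $\mathbb{E}\|\cdot\|$ by Jensen's inequality (which the paper labels H\"older's identity). Your closing observation --- that the Fourier slices of a real standard Gaussian tensor are conjugate-symmetric complex Gaussians, so Theorem~\ref{Thm1} as stated for real Gaussian test matrices does not apply verbatim and either a complex-Gaussian analogue or a Fourier-domain sampling of $\underline{\bf \Omega}_1$ is needed --- flags a genuine subtlety that the paper's own proof passes over in silence, and is worth retaining.
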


\begin{proof}
We prove the theorem only for \eqref{err1}, and part \eqref{err2} can be similarly proved. Considering the linearity of the expectation operator and formula \eqref{eq_fou} we have 
\begin{eqnarray}\label{Summ_1}
\nonumber
\mathbb{E}\,(\|\underline{\X}-{\underline{\bf \W}_1}*{{\bf \underline{\W}}^{\dag}_1}*\underline{\X}\|_F^2)=\hspace*{2cm}\\
\frac{1}{I_3}\left(\sum_{i=1}^{I_3}\mathbb{E}\,\|\widehat{\bf X}^{(i)}-\widehat{\bf W}_1^{(i)}\widehat{\bf W}_1^{(i)\,\dag}\widehat{\bf X}^{(i)}\|_F^2\right),
\end{eqnarray}
where $\widehat{\underline{\bf X}}^{(i)}=\widehat{\underline{\X}}(:,:,i)$ and $\widehat{\underline{\bf W}}_1^{(i)}=\widehat{\underline{\W}}_1(:,:,i)$.
Now, if we use Theorem \ref{Thm1} to bound each term of \eqref{Summ_1} and use the H\"{o}lder's identity, we get
\[
\mathbb{E}({\,\|(\underline{\I}-\underline{\W}_1*\underline{\W}_1^{\dag})*\cX\|_F})\leq\left(\mathbb{E}({\,\|(\underline{\I}-\underline{\bf W}_1*\underline{\bf W}_1^{\dag})*\cX\|_F^2})\right)^{1/2}.
\]
This completes the proof.
\end{proof}
\begin{figure}
    \begin{center}
\includegraphics[width=0.6\linewidth]{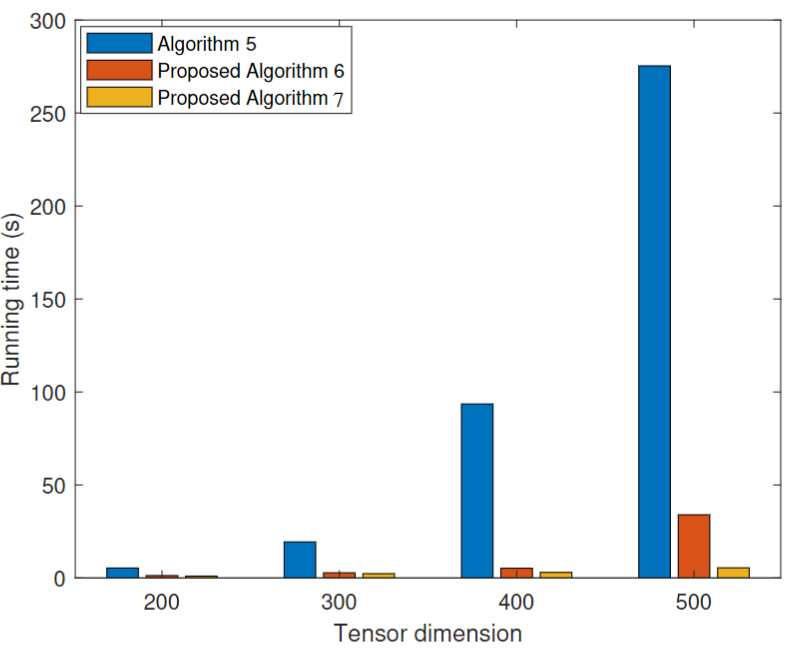}
    \caption{Running time comparison of different algorithms for the synthetic data tensors in Example \ref{ex1}.}\label{fig_1}
    \end{center}
\end{figure}

\section{Experimental Results}\label{Sec:Exper}
In this section, we conduct several simulations to show the efficiency of the proposed algorithms and their superiority over the baseline algorithm. We have used \textsc{Matlab} and some functions of the toolbox: \\\url{https://github.com/canyilu/Tensor-tensor-product-toolbox} to implement the proposed algorithms using a laptop computer with 2.60 GHz Intel(R) Core(TM) i7-5600U processor and 8GB memory. The algorithms are compared in terms of running time and relative error, defined as follows
\[
{\rm Relative\,\,Error}=\frac{\|\underline{\mathbf X}-\underline{\mathbf U}*\underline{\mathbf C}*\underline{\mathbf Z}\|_F+\|\underline{\mathbf Y}-\underline{\mathbf V}*\underline{\mathbf S}*\underline{\mathbf Z}\|_F}{\|\underline{\bf X}\|_F+\|\underline{\bf Y}\|_F}.
\]
The Peak Signal-to-Noise Ratio (PSNR) is also used to compare the quality of images. The PSNR of two images $\underline{\bf X}$ and $\underline{\bf Y}$ is defined as
\[
{\rm PSNR}=10\log _{10}\left({\frac{255^2}{{\rm MSE}}}\right){\rm dB},
\]
where ${\rm MSE}=\frac{||\underline{\X}-\underline{\Y}||_F^2}{{\rm num}(\underline{\X})}$ and ``num($\underline{\bf X}$)'' stands for the number of elements of the data tensor $\underline{\bf X}$. The implemented algorithms are available at the GitHub repository 
at \\ \url{https://github.com/SalmanAhmadi-Asl/Tubal_GSVD}.
\begin{exa}\label{ex1} ({\bf Synthetic data tensors})
Let us generate random data tensors $\underline{\bf X}$ and $\underline{\bf Y}$ with zero mean and unit variance of size $n\times n\times n,$ and the tubal rank 50, where $n=200,300,400,500$. Then, the basic TGSVD and two proposed randomized TGSVD algorithms (Algorithms \ref{ALG:RGTSVD1} and \ref{ALG:RGTSVD2}) are applied to the mentioned data tensors. We set the oversampling parameters as $50$ in both Algorithms \ref{ALG:RGTSVD1} and \ref{ALG:RGTSVD2}. The running times of the algorithms are shown in Figure \ref{fig_1} and the corresponding relative errors achieved by them are reported in Table \ref{Table1}. From Figure \ref{fig_1}, for $n=300,400,500$ we achieve $\times 55\, \times 37.9,$ and $\times 11.01$ speed-up, respectively. So, in all scenarios, we have more than one order of magnitude acceleration. Also from Table \ref{Table1}, we see that the difference between the relative errors of the algorithms is negligible. So, we can provide satisfying results, in much less time than the baseline algorithm \ref{ALG:GTSVD}. This shows the superiority of the proposed algorithms compared to the baseline method for handling large-scale data tensors.

\begin{table}
\begin{center}
\caption{Relative error comparison of different algorithms for the synthetic data tensors in Example \ref{ex1}.}\label{Table1}
\begin{tabular}{||c c c c ||} 
 \hline
  & $n=300$ & $n=400$ & $n=500$ \\
 \hline\hline
 Algorithm \ref{ALG:GTSVD} & 9.8833e-19 & 6.7230e-19 & 7.4645e-19 \\ 
 \hline
 Proposed Algorithm \ref{ALG:RGTSVD1} & 5.4381e-18 & 2.4254e-18 & 2.4809e-18 \\
 \hline
 Proposed Algorithm \ref{ALG:RGTSVD2} & 9.8108e-18 & 9.5354e-18 & 2.2562e-18\\
 \hline
\end{tabular}
\end{center}
\end{table}

\end{exa}

\begin{exa}\label{ex2}({\bf synthetic data tensors})
In this example, we consider the following data tensors 
\begin{itemize}
    \item $\underline{\bf X}(i,j,k)=\frac{1}{\sqrt{i^2+j^2+k^2}}$
    \item $\underline{\bf Y}(i,j,k)=\frac{1}{(i^3+j^3+k^3)^{1/3}}$
\end{itemize}
of size $400\times 400\times 400$. It is easy to check that these tensors have low tubal-rank structures. 
We set the oversampling parameter to $50$ and the tubal rank $R=50$ in Algorithms \ref{ALG:RGTSVD1} and \ref{ALG:RGTSVD2} and apply them to the mentioned data tensors. The execution times of the proposed algorithms and the baseline Algorithm \ref{ALG:GTSVD} are reported in Figure \ref{fig_2} (left figure), and the relative errors of the algorithms are also shown in Figure \ref{fig_2} (right figure). The numerical results presented in Figures \ref{fig_2}, show that Algorithms \ref{ALG:RGTSVD1} and \ref{ALG:RGTSVD2} are much faster than Algorithm \ref{ALG:GTSVD}, and they scale quite well to the dimensions of the data tensors. The accuracy achieved by the proposed algorithms was also almost the same and even better than the baseline algorithm (Algorithm \ref{ALG:GTSVD}), so this indicates the better performance and efficiency of the proposed algorithms. 

\begin{figure}
    \begin{center}
\includegraphics[width=1\linewidth]{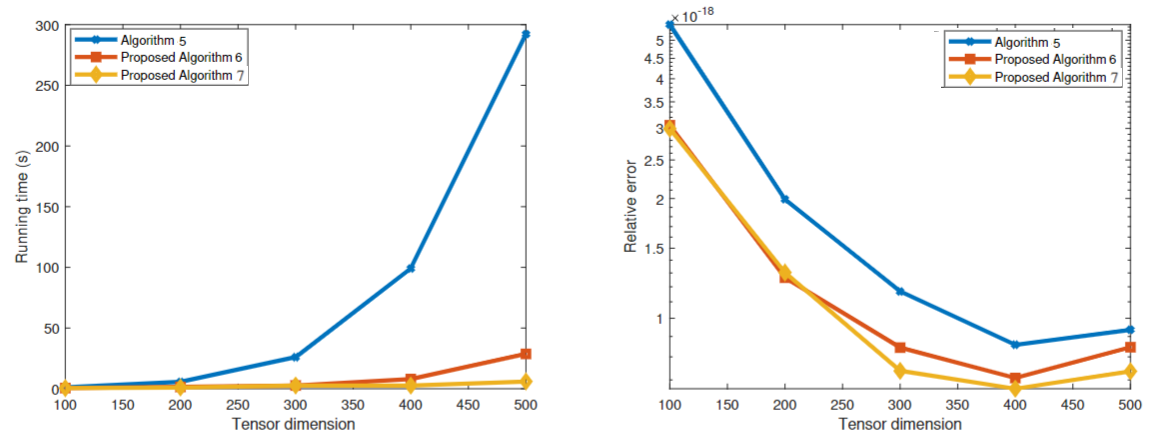}
    \caption{Running time and relative error comparisons of different algorithms for the synthetic data tensors for Example \ref{ex2}.}\label{fig_2}
    \end{center}
\end{figure}


\end{exa}

\begin{exa}\label{exa3}
  {(\bf Real data tensors}). In this example, we show the application of the proposed algorithms to the image restoration task. Image restoration is a computer vision task that involves repairing or improving the quality of damaged or degraded images. The goal of image restoration is to recover the original information, enhance the visual appearance, or remove unwanted artifacts from an image. Let us consider the tensor regularized problem as
  \begin{align}\label{regu}
      \min_{\underline{\bf  X}\in\mathbb{R}^{m\times  1\times n}} \|\underline{\bf}\underline{\bf A}*\underline{\bf}\underline{\bf X}-\underline{\bf B}\|_F+\lambda\|\underline{\bf L}*\underline{\bf X}\|_F,
  \end{align}
  where $\underline{\bf}\underline{\bf L}\in\mathbb{R}^{(m-2)\times m\times n}$ is a regularization operator and $\lambda$ is a regularization parameter. The regularization operator defined as follows
  \begin{eqnarray}
\underline{\bf L}(:,:,1)=\frac{1}{4}\begin{bmatrix}
-1 & 2 & -1 & &\\
 & \ddots & \ddots & \ddots &\\
 &  & -1 & 2 & -1
\end{bmatrix},    
\end{eqnarray}
and the other frontal slices are equal to zero. It is known that the formulation \eqref{regu} can remove noise and artifacts from images, and it has  a unique solution for any $\lambda$, see \cite{reichel2022tensor} for the details about this formulation. The normal equation associated to \eqref{regu} is 
\begin{eqnarray}\label{reg_ten}
(\underline{\bf A}^T*\underline{\bf A}+\lambda\underline{\bf L}^T*\underline{\bf L})*\underline{\bf X}=\underline{\bf A}^T*\underline{\bf B},   
\end{eqnarray}
and inserting the TGSVDs of $\underline{\bf A}$ and $\underline{\bf L}$ in \eqref{reg_ten}
\begin{eqnarray}\label{gtsvd}
\underline{\bf A}=\underline{\bf U}*\underline{\bf C}*\underline{\bf Z}, \quad \underline{\bf L}=\underline{\bf V}*\underline{\bf S}*\underline{\bf Z},
\end{eqnarray}
 we get the regularized solution as 
\begin{eqnarray}\label{reg_sol}
\cX_\mu=\cZ^{-1}*(\cC^T*\cC+\lambda^{-1}\cS^T*\cS)^{-1}*\cC^T*\cU^T*\cB.
\end{eqnarray}
 To compute the regularized solution \eqref{reg_sol}, the computation of the GTSVD is required in \eqref{gtsvd}. So, we applied the proposed randomized GTSVD algorithms \ref{ALG:RGTSVD1} and \ref{ALG:RGTSVD2} and the classical one \ref{ALG:GTSVD}. We considered the ``Airplaane'', ``Barbara'' and ``Peppers''  images depicted in Figure \ref{fig_real} all are of size $256\times 256\times 3$. Then, we added a noise to the images as follows
 \begin{eqnarray}
 \cX=\cX_{clean}+\delta\frac{\cY}{\|\cY\|_F}\|\cX_{clean}\|_F,    
 \end{eqnarray}
 where $\cY$ is a standard Gaussian tensor of size $256\times 256\times 3$. Next, the formulation \ref{regu} was used for the denoising procedure with the regularized parameter $\lambda=8.56e\rnumber2$. The simulation results are reported in Table \ref{tab:comp}. The numerical results clearly show that the proposed randomized GTSVD algorithms provide satisfactory results in much less time compared to the classical GTSVD algorithm. This example persuaded us that, in practical application circumstances, randomized GTSVD are quicker and more efficient.
\begin{figure}
    \begin{center}
    \includegraphics[width=0.6\linewidth]{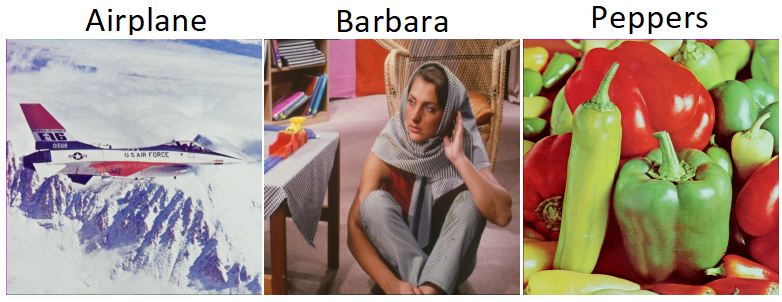}
    \caption{(Top) The true benchmark images for Example \ref{exa3}. }\label{fig_real}
    \end{center}
\end{figure}

\begin{center}
\begin{table*}[h]
\begin{center}  
\caption{The comparison of the PSNR and running time achieved by the proposed algorithms and the baseline method for example \ref{exa3}.}   
\label{tab:comp}
\begin{tabular}{|c|c|c|c|} 
\hline
\multicolumn{4}{|c|}{PSNR} \\
\hline
Method   & Airplane & Barbara  & Peppers \\
\hline
Algorithm \ref{ALG:GTSVD}  & 28.32 &  26.12 & 28.89 \\
\hline
Proposed Algorithm \ref{ALG:RGTSVD1} & 27.13 & 25.76  & 28.33 \\
\hline
Proposed Algorithm \ref{ALG:RGTSVD2} & 27.14 & 25.78  & 28.35 \\
\hline
\multicolumn{4}{|c|}{Time (Second)} \\
\hline
Algorithm \ref{ALG:GTSVD}  & {\bf 4.12} & 4.30 & 4.24 \\
\hline
Proposed Algorithm \ref{ALG:RGTSVD1} & 2.32 & 2.11  & 2.11 \\
\hline
Proposed Algorithm \ref{ALG:RGTSVD2} & 2.60 &  2.45 & 2.52 \\
\hline
\hline  
\end{tabular}
\end{center}
\end{table*}
\end{center}

\end{exa}

\section{Conclusion and future works}\label{Sec:Con}
In this paper, we proposed two fast randomized algorithms to compute the generalized T-SVD (GTSVD) of tensors based on the tensor product (T-product). Given two third-order tensors, the random projection technique is first used to compute two small tensor sketches of the given tensors, capturing the most important ranges of them. Then, from the small sketches, we recovered the GTSVD of the original data tensors from the GTSVD of the small tensor sketches, which are easier to analyze. The computer simulations were conducted to convince the feasibility and applicability of the proposed randomized algorithm. The error analysis of the proposed algorithms using power iteration needs to be investigated, and this will be our future research. We plan to also develop randomized algorithms for the computation of the GTSVD to be applicable for steaming data tensors, which arises in real-world applications. The generalization of the proposed algorithm to higher order tensors is our ongoing research work.

\section{Acknowledgement} 
The authors would like to thank the editor and two reviewers for their constructive comments, which totally improved the quality of the paper. 
\section{Conflict of Interest Statement}
 The authors declare that they have no
 conflict of interest with anything.

\bibliographystyle{abbrv}
\bibliography{cas-refs}

\begin{thebibliography}{10}

\bibitem{ahmadi2022efficient}
S.~Ahmadi-Asl.
\newblock An efficient randomized fixed-precision algorithm for tensor singular
  value decomposition.
\newblock {\em Communications on Applied Mathematics and Computation}, pages
  1--20, 2022.

\bibitem{ahmadi2023note}
S.~Ahmadi-Asl.
\newblock A note on generalized tensor cur approximation for tensor pairs and
  tensor triplets based on the tubal product.
\newblock {\em arXiv preprint arXiv:2305.00754}, 2023.

\bibitem{ahmadi2021randomized}
S.~Ahmadi-Asl, S.~Abukhovich, M.~G. Asante-Mensah, A.~Cichocki, A.~H. Phan,
  T.~Tanaka, and I.~Oseledets.
\newblock Randomized algorithms for computation of {T}ucker decomposition and
  higher order {SVD} ({HOSVD}).
\newblock {\em IEEE Access}, 9:28684--28706, 2021.

\bibitem{ahmadi2020randomized}
S.~Ahmadi-Asl, A.~Cichocki, A.~H. Phan, M.~G. Asante-Mensah, M.~M. Ghazani,
  T.~Tanaka, and I.~Oseledets.
\newblock Randomized algorithms for fast computation of low rank tensor ring
  model.
\newblock {\em Machine Learning: Science and Technology}, 2(1):011001, 2020.

\bibitem{ahmadi2024randomized}
S.~Ahmadi-Asl, A.-H. Phan, and A.~Cichocki.
\newblock A randomized algorithm for tensor singular value decomposition using
  an arbitrary number of passes.
\newblock {\em Journal of Scientific Computing}, 98(1):23, 2024.

\bibitem{alter2003generalized}
O.~Alter, P.~O. Brown, and D.~Botstein.
\newblock Generalized singular value decomposition for comparative analysis of
  genome-scale expression data sets of two different organisms.
\newblock {\em Proceedings of the National Academy of Sciences},
  100(6):3351--3356, 2003.

\bibitem{bai1988numerical}
Z.~Bai.
\newblock Numerical treatment of restricted gauss-markov model.
\newblock {\em Communications in Statistics-Simulation and Computation},
  17(2):569--579, 1988.

\bibitem{barlow1988error}
J.~L. Barlow.
\newblock Error analysis and implementation aspects of deferred correction for
  equality constrained least squares problems.
\newblock {\em SIAM journal on numerical analysis}, 25(6):1340--1358, 1988.

\bibitem{cao2022perturbation}
Z.~Cao and P.~Xie.
\newblock Perturbation analysis for t-product-based tensor inverse,
  moore-penrose inverse and tensor system.
\newblock {\em Communications on Applied Mathematics and Computation},
  4(4):1441--1456, 2022.

\bibitem{che2022fast}
M.~Che, X.~Wang, Y.~Wei, and X.~Zhao.
\newblock Fast randomized tensor singular value thresholding for low-rank
  tensor optimization.
\newblock {\em Numerical Linear Algebra with Applications}, 29(6):e2444, 2022.

\bibitem{che2019randomized}
M.~Che and Y.~Wei.
\newblock Randomized algorithms for the approximations of {T}ucker and the
  tensor train decompositions.
\newblock {\em Advances in Computational Mathematics}, 45(1):395--428, 2019.

\bibitem{cichocki2016tensor}
A.~Cichocki, N.~Lee, I.~Oseledets, A.-H. Phan, Q.~Zhao, D.~P. Mandic, et~al.
\newblock Tensor networks for dimensionality reduction and large-scale
  optimization: Part 1 low-rank tensor decompositions.
\newblock {\em Foundations and Trends{\textregistered} in Machine Learning},
  9(4-5):249--429, 2016.

\bibitem{ding2023randomized}
M.~Ding, Y.~Wei, and P.~Xie.
\newblock A randomized singular value decomposition for third-order oriented
  tensors.
\newblock {\em Journal of Optimization Theory and Applications},
  197(1):358--382, 2023.

\bibitem{golub2013matrix}
G.~H. Golub and C.~F. Van~Loan.
\newblock {\em Matrix computations}.
\newblock JHU press, 2013.

\bibitem{halko2011finding}
N.~Halko, P.-G. Martinsson, and J.~A. Tropp.
\newblock Finding structure with randomness: Probabilistic algorithms for
  constructing approximate matrix decompositions.
\newblock {\em SIAM review}, 53(2):217--288, 2011.

\bibitem{hansen1998rank}
P.~C. Hansen.
\newblock {\em Rank-deficient and discrete ill-posed problems: numerical
  aspects of linear inversion}.
\newblock SIAM, 1998.

\bibitem{hitchcock1927expression}
F.~L. Hitchcock.
\newblock The expression of a tensor or a polyadic as a sum of products.
\newblock {\em Journal of Mathematics and Physics}, 6(1-4):164--189, 1927.

\bibitem{kaagstrom1984generalized}
B.~K{\aa}gstr{\"o}m.
\newblock The generalized singular value decomposition and the general
  (a-$\lambda$b)-problem.
\newblock {\em BIT Numerical Mathematics}, 24:568--583, 1984.

\bibitem{kilmer2013third}
M.~E. Kilmer, K.~Braman, N.~Hao, and R.~C. Hoover.
\newblock Third-order tensors as operators on matrices: A theoretical and
  computational framework with applications in imaging.
\newblock {\em SIAM Journal on Matrix Analysis and Applications},
  34(1):148--172, 2013.

\bibitem{kilmer2011factorization}
M.~E. Kilmer and C.~D. Martin.
\newblock Factorization strategies for third-order tensors.
\newblock {\em Linear Algebra and its Applications}, 435(3):641--658, 2011.

\bibitem{lu2019tensor}
C.~Lu, J.~Feng, Y.~Chen, W.~Liu, Z.~Lin, and S.~Yan.
\newblock Tensor robust principal component analysis with a new tensor nuclear
  norm.
\newblock {\em IEEE transactions on pattern analysis and machine intelligence},
  42(4):925--938, 2019.

\bibitem{martin2013order}
C.~D. Martin, R.~Shafer, and B.~LaRue.
\newblock An order-p tensor factorization with applications in imaging.
\newblock {\em SIAM Journal on Scientific Computing}, 35(1):A474--A490, 2013.

\bibitem{miao2020generalized}
Y.~Miao, L.~Qi, and Y.~Wei.
\newblock Generalized tensor function via the tensor singular value
  decomposition based on the t-product.
\newblock {\em Linear Algebra and its Applications}, 590:258--303, 2020.

\bibitem{paige1985general}
C.~C. Paige.
\newblock The general linear model and the generalized singular value
  decomposition.
\newblock {\em Linear Algebra and its applications}, 70:269--284, 1985.

\bibitem{paige1981towards}
C.~C. Paige and M.~A. Saunders.
\newblock Towards a generalized singular value decomposition.
\newblock {\em SIAM Journal on Numerical Analysis}, 18(3):398--405, 1981.

\bibitem{ponnapalli2011higher}
S.~P. Ponnapalli, M.~A. Saunders, C.~F. Van~Loan, and O.~Alter.
\newblock A higher-order generalized singular value decomposition for
  comparison of global mrna expression from multiple organisms.
\newblock {\em PloS one}, 6(12):e28072, 2011.

\bibitem{reichel2022tensor}
L.~Reichel and U.~O. Ugwu.
\newblock Tensor arnoldi--tikhonov and gmres-type methods for ill-posed
  problems with a t-product structure.
\newblock {\em Journal of Scientific Computing}, 90:1--39, 2022.

\bibitem{saibaba2021randomized}
A.~K. Saibaba, J.~Hart, and B.~van Bloemen~Waanders.
\newblock Randomized algorithms for generalized singular value decomposition
  with application to sensitivity analysis.
\newblock {\em Numerical Linear Algebra with Applications}, 28(4):e2364, 2021.

\bibitem{speiser1984signal}
J.~M. Speiser and C.~Van~Loan.
\newblock Signal processing computations using the generalized singular value
  decomposition.
\newblock In {\em Real-Time Signal Processing VII}, volume 495, pages 47--57.
  SPIE, 1984.

\bibitem{tucker1964extension}
L.~R. Tucker et~al.
\newblock The extension of factor analysis to three-dimensional matrices.
\newblock {\em Contributions to mathematical psychology}, 110119, 1964.

\bibitem{ugwu2021viterative}
U.~O. Ugwu.
\newblock {\em Viterative tensor factorization based on Krylov subspace-type
  methods with applications to image processing}.
\newblock Kent State University, PhD Thesis,, 2021.

\bibitem{ugwu2021tensor}
U.~O. Ugwu and L.~Reichel.
\newblock Tensor regularization by truncated iteration: a comparison of some
  solution methods for large-scale linear discrete ill-posed problem with a
  t-product.
\newblock {\em arXiv preprint arXiv:2110.02485}, 2021.

\bibitem{van1989analysis}
S.~Van~Huffel and J.~Vandewalle.
\newblock Analysis and properties of the generalized total least squares
  problem {AX}$\approx${B} when some or all columns in $a$ are subject to
  error.
\newblock {\em SIAM Journal on Matrix Analysis and Applications}, 10(3):294,
  1989.

\bibitem{van1985method}
C.~Van~Loan.
\newblock On the method of weighting for equality-constrained least-squares
  problems.
\newblock {\em SIAM Journal on Numerical Analysis}, 22(5):851--864, 1985.

\bibitem{van1976generalizing}
C.~F. Van~Loan.
\newblock Generalizing the singular value decomposition.
\newblock {\em SIAM Journal on numerical Analysis}, 13(1):76--83, 1976.

\bibitem{wang2020tensor}
X.~Wang, M.~Che, and Y.~Wei.
\newblock Tensor neural network models for tensor singular value
  decompositions.
\newblock {\em Computational optimization and applications}, 75:753--777, 2020.

\bibitem{wei2021randomized}
W.~Wei, H.~Zhang, X.~Yang, and X.~Chen.
\newblock Randomized generalized singular value decomposition.
\newblock {\em communications on Applied mathematics and computation},
  3(1):137--156, 2021.

\bibitem{wei2016tikhonov}
Y.~Wei, P.~Xie, and L.~Zhang.
\newblock Tikhonov regularization and randomized gsvd.
\newblock {\em SIAM Journal on Matrix Analysis and Applications},
  37(2):649--675, 2016.

\bibitem{yu2018efficient}
W.~Yu, Y.~Gu, and Y.~Li.
\newblock Efficient randomized algorithms for the fixed-precision low-rank
  matrix approximation.
\newblock {\em SIAM Journal on Matrix Analysis and Applications},
  39(3):1339--1359, 2018.

\bibitem{zhang2018randomized}
J.~Zhang, A.~K. Saibaba, M.~E. Kilmer, and S.~Aeron.
\newblock A randomized tensor singular value decomposition based on the
  t-product.
\newblock {\em Numerical Linear Algebra with Applications}, 25(5):e2179, 2018.

\bibitem{zhang2021cs}
Y.~Zhang, X.~Guo, P.~Xie, and Z.~Cao.
\newblock {CS} decomposition and {GSVD} for tensors based on the t-product.
\newblock {\em arXiv preprint arXiv:2106.16073}, 2021.

\end{thebibliography}


\end{document}